\numberwithin{equation}{section}
\theoremstyle{plain}
\newtheorem{thm}{\protect\theoremname}
  \theoremstyle{plain}
  \newtheorem{cor}{\protect\corollaryname}
  \theoremstyle{remark}
  \newtheorem{rem}{\protect\remarkname}
  \theoremstyle{plain}
  \newtheorem*{thm*}{\protect\theoremname}
  \theoremstyle{plain}
  \newtheorem{lem}{\protect\lemmaname}
\date{}
\author{Avner Kiro, Yotam Smilansky, Uzy Smilansky}
\address{Avner Kiro, Raymond and Beverly Sackler School of Mathematical Sciences, Tel Aviv University, Tel Aviv 69978, Israel}
\email{avnerefrak@mail.tau.ac.il}
\address{Yotam Smilansky, Raymond and Beverly Sackler School of Mathematical Sciences, Tel Aviv University, Tel Aviv 69978, Israel}
\email{yotam.smilansky@mail.huji.ac.il}
\address{Uzy Smilansky, Department of Physics of Complex Systems, The Weizmann Institute of Science, Rehovot 76100, Israel}
\email{uzy.smilansky@weizmann.ac.il}
  \providecommand{\lemmaname}{Lemma}
  \providecommand{\remarkname}{Remark}
  \providecommand{\theoremname}{Theorem}
\providecommand{\corollaryname}{Corollary}
\providecommand{\theoremname}{Theorem}
\begin{document}
\global\long\def\adj{{\rm adj}}

\global\long\def\tr{{\rm tr}}

\global\long\def\span{{\rm span}}

\global\long\def\diag{{\rm diag}}

\global\long\def\vol{{\rm vol}}

\title{The Distribution of Path Lengths On Directed Weighted Graphs}
\begin{abstract}
We consider directed weighted graphs and define various families of
path counting functions. Our main results are explicit formulas for
the main term of the asymptotic growth rate of these counting functions,
under some irrationality assumptions on the lengths of all closed
orbits on the graph. In addition we assign transition probabilities
to such graphs and compute statistics of the corresponding random
walks. Some examples and applications are reviewed.
\end{abstract}

\maketitle

\section{Introduction and Main Results}

Questions regarding the distribution of path lengths on directed weighted
graphs are encountered in various fields of study of mathematics and
physics. They arise naturally in dynamics and the study of closed
orbits of suspensions of shifts of finite type, see among others \cite{key-7},
\cite{key-213241}, \cite{key-143513125} and \cite{key-2}, and the
more recent \cite{key-4} and \cite{key-1-1}.

Our motivations for counting paths on weighted graphs are diverse.
The second author's motivation originates in the study of a model
of mathematical quasicrystals which we call multiscale substitution
tilings, and in the study of equidistribution of what is known as
Kakutani's partitions, first described in \cite{key-3213}. The connection
to problems concerning path counting on weighted graphs is introduced
in subsection \ref{subsec:Multiscale-substitution}. The third author\textquoteright s
motivation is rooted in theoretical physics, specifically in the spectral
properties of the Schr\"odinger operator for systems which are chaotic
in the classical limit and for metric graphs \cite{key-3}. Of particular
relevance are studies of the distribution of delay (transit) times
through chaotic scatterers such as e.g., the scattering of ultra-short
electromagnetic pulses by complex molecules, or traversing networks
of transmission lines \cite{key-1} and \cite{key-345}.

\subsection{Counting paths in graphs}

Let $G=\left(\mathcal{V},\mathcal{E},l\right)$ be a directed weighted
metric graph with a set $\mathcal{V}=\left\{ 1,...,n\right\} $ of
vertices and a set $\mathcal{E}$ of edges. A positive weight is assigned
to each edge $\alpha\in\mathcal{E}$, and we think of this weight
as the length of $\alpha$. For a path $\gamma$ connecting two vertices
in $\mathcal{V}$, the path metric $l$ is defined to be the sum of
the weights of the edges in $\gamma$. When considering paths which
do not necessarily terminate at a vertex of $G$, the path metric
is defined by $l\left(\gamma\right)=a$ if the path $\gamma$ is isometric
to $\left[0,a\right]\subset\mathbb{R}$. Throughout this paper $G$
is assumed to be a strongly connected multigraph, that is a graph
which admits a path from every vertex $i\in\mathcal{V}$ to every
vertex $j\in\mathcal{V}$, and loops and multiple edges are allowed.

We say that $G$ is a graph of incommensurable orbits, or \textbf{incommensurable}
for short, if there exist at least two closed paths in $G$ of lengths
$a,b$ such that $a\not\in\mathbb{Q}b$. This irrationality condition
on the lengths of the edges is equivalent to the set of lengths of
all closed orbits in $G$ not being a uniformly discrete subset of
$\mathbb{R}$. Indeed, if there exist two closed paths in $G$ of
lengths $a,b$ such that $a\not\in\mathbb{Q}b$, then by Dirichlet's
approximation theorem for every $\varepsilon>0$ there exist $p,q\in\mathbb{N}$
such that $\left|aq-pb\right|<\varepsilon$, and so the set of lengths
of closed orbits in $G$ is not uniformly discrete. Conversely, if
the set of lengths of closed orbits is rationally dependent, then
the finiteness of the graph implies that there is a finite set $L$
of lengths for which the length of any closed orbit in $G$ is a linear
combination with integer coefficients of elements in $L$. It follows
that the set of lengths of closed orbits in $G$ is uniformly discrete.

Let $i,j\in\mathcal{V}$ be a pair of vertices in $G$, and assume
that there are $k\geq0$ edges $\alpha_{1},\ldots,\alpha_{k}$ from
$i$ to $j$. The matrix valued function $M:\mathbb{C}\rightarrow M_{n}\left(\mathbb{C}\right)$,
which we call the graph matrix function of $G$, is defined by
\[
M_{ij}\left(s\right)=e^{-s\cdot l\left(\alpha_{1}\right)}+\cdots+e^{-s\cdot l\left(\alpha_{k}\right)}
\]
and $M_{ij}\left(s\right)=0$ if $i$ is not connected to $j$ by
an edge. Note that the restriction of $M$ to $\mathbb{R}$ is real
valued.
\begin{thm}
\label{main result 1}Let $G$ be a strongly connected incommensurable
graph. There exist a positive constant $\lambda$ and a matrix $Q\in M_{n}\left(\mathbb{R}\right)$
with positive entries such that

$\left(i\right)$ The number of paths from $i\in\mathcal{V}$ to $j\in\mathcal{V}$
of length at most $x$ grows as
\[
\frac{1}{\lambda}Q_{ij}e^{\lambda x}+o\left(e^{\lambda x}\right),\quad x\rightarrow\infty.
\]

$\left(ii\right)$ Let $\alpha\in\mathcal{E}$ be an edge in $G$
which originates in vertex $j\in\mathcal{V}$. The number of paths
of length exactly $x$ from some vertex $i$ to a point on the edge
$\alpha$ grows as
\[
\frac{1-e^{-l\left(\alpha\right)\lambda}}{\lambda}Q_{ij}e^{\lambda x}+o\left(e^{\lambda x}\right),\quad x\rightarrow\infty.
\]
The constant $\lambda$ is the maximal real value for which the spectral
radius of $M$ is equal to $1$, and
\[
Q=Q\left(M\left(\lambda\right)\right)=\frac{\adj\left(I-M\left(\lambda\right)\right)}{-\tr\left(\adj\left(I-M\left(\lambda\right)\right)\cdot M^{\prime}\left(\lambda\right)\right)}
\]
where $M^{\prime}$ is the entry-wise derivative of $M$, and $\adj A$
is the adjugate or classical adjoint matrix of $A$, that is the transpose
of its cofactor matrix.
\end{thm}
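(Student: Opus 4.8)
The plan is to deduce $(ii)$ from $(i)$ and to prove $(i)$ by a Tauberian argument. Write $N_{ij}(x)$ for the counting function in $(i)$. A path of length exactly $x$ from $i$ to a point of the edge $\alpha$ is a path $\gamma$ from $i$ to $j$ followed by the initial segment of $\alpha$ of length $x-l(\gamma)$, and such a segment exists precisely when $x-l(\alpha)<l(\gamma)\le x$; hence the number of these paths equals $N_{ij}(x)-N_{ij}(x-l(\alpha))$, and inserting the asymptotics of $(i)$ produces $\tfrac{1-e^{-l(\alpha)\lambda}}{\lambda}Q_{ij}e^{\lambda x}+o(e^{\lambda x})$, which is $(ii)$. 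So it remains to establish $(i)$.

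First I would encode the path lengths from $i$ to $j$ in the positive measure $\mu_{ij}=\sum_{\gamma\colon i\to j}\delta_{l(\gamma)}$, so that $N_{ij}(x)=\mu_{ij}([0,x])$. Decomposing a path into its successive edges and summing the matrix geometric series gives, for $\operatorname{Re}s$ large,
\[
\widehat{\mu}_{ij}(s)=\sum_{\gamma\colon i\to j}e^{-s\,l(\gamma)}=\sum_{m\ge0}\bigl(M(s)^{m}\bigr)_{ij}=\bigl((I-M(s))^{-1}\bigr)_{ij}=\frac{\adj(I-M(s))_{ij}}{\det(I-M(s))},
\]
the series converging absolutely exactly where the spectral radius of $M(s)$ is less than $1$. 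For real $s$ the matrix $M(s)$ is nonnegative and, since $G$ is strongly connected, irreducible, and its entries strictly decrease in $s$; hence by Perron--Frobenius $s\mapsto\rho(M(s))$ is continuous and strictly decreasing from $+\infty$ as $s\to-\infty$ to $0$ as $s\to+\infty$, so there is a unique real $\lambda$ with $\rho(M(\lambda))=1$, coinciding with the maximal one in the statement. Since $G$ is incommensurable it is not a single directed cycle, so $\rho(M(0))>1$ and $\lambda>0$. Finally $|M(s)|\le M(\operatorname{Re}s)$ entrywise gives $\rho(M(s))\le\rho(M(\operatorname{Re}s))<1$ for $\operatorname{Re}s>\lambda$, so $\widehat{\mu}_{ij}$ is holomorphic on that half-plane and meromorphic on $\mathbb{C}$.

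Next I would analyse the leading singularity at $s=\lambda$. As $1$ is a simple eigenvalue of the irreducible matrix $M(\lambda)$, the matrix $I-M(\lambda)$ has rank $n-1$ and $\adj(I-M(\lambda))=c\,vw^{T}$ for some $c\ne0$, where $v,w>0$ are the right and left Perron eigenvectors of $M(\lambda)$. By Jacobi's formula $\frac{d}{ds}\det(I-M(s))=-\tr(\adj(I-M(s))M'(s))$, which at $s=\lambda$ equals $-c\,w^{T}M'(\lambda)v$; the entries of $M'(\lambda)$ are $\le0$ and strictly negative along every edge, so (using strong connectivity) $M'(\lambda)v$ has strictly negative entries and this derivative is nonzero. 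Thus $\lambda$ is a simple zero of $\det(I-M(s))$, so $\widehat{\mu}_{ij}$ has a simple pole at $\lambda$ with residue $\adj(I-M(\lambda))_{ij}/(-\tr(\adj(I-M(\lambda))M'(\lambda)))=Q_{ij}$, and the same computation gives $Q=vw^{T}/|w^{T}M'(\lambda)v|$, which indeed has positive entries.

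The crux --- and the step I expect to be the main obstacle --- is to rule out any other zero of $\det(I-M(s))$ on the line $\operatorname{Re}s=\lambda$; this is exactly where incommensurability is needed, playing the role of an aperiodicity hypothesis. Suppose $\det(I-M(\lambda+it))=0$ with $t\ne0$, so $1$ is an eigenvalue of $M(\lambda+it)$; since $|M(\lambda+it)|\le M(\lambda)$ entrywise and $\rho(M(\lambda))=1$, we get $\rho(M(\lambda+it))=1$, so $1$ is a peripheral eigenvalue and the Wielandt/Perron--Frobenius analysis of matrices dominated by an irreducible nonnegative matrix yields $M(\lambda+it)=D\,M(\lambda)\,D^{-1}$ for some diagonal unitary $D=\diag(d_k)$. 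Comparing $(i,j)$ entries of the $m$-th powers and using $(M(s)^{m})_{ij}=\sum_{\gamma\colon i\to j,\,|\gamma|=m}e^{-s\,l(\gamma)}$ gives $\sum_{|\gamma|=m}e^{-it\,l(\gamma)}e^{-\lambda l(\gamma)}=d_i\bar d_j\sum_{|\gamma|=m}e^{-\lambda l(\gamma)}$ over $m$-edge paths from $i$ to $j$; taking moduli forces all such paths to share the same value of $e^{-it\,l(\gamma)}$, and taking $i=j$ shows $e^{-it\,l(\gamma)}=1$ for every closed orbit $\gamma$ through $i$. Strong connectivity then propagates this to all closed orbits of $G$, so every closed-orbit length lies in $\tfrac{2\pi}{t}\mathbb{Z}$ and the set of closed-orbit lengths is uniformly discrete, contradicting incommensurability. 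Hence $\widehat{\mu}_{ij}(s)-Q_{ij}/(s-\lambda)$ extends holomorphically past the closed half-plane $\{\operatorname{Re}s\ge\lambda\}$, and since $\mu_{ij}$ is a positive measure whose Laplace transform converges on $\{\operatorname{Re}s>\lambda\}$, the Wiener--Ikehara Tauberian theorem gives $\mu_{ij}([0,x])=\tfrac{1}{\lambda}Q_{ij}e^{\lambda x}+o(e^{\lambda x})$, which is $(i)$.
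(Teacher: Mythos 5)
Your proposal is correct, and its overall skeleton is the one the paper uses: encode the counting function in the generating series $\sum_m M(s)^m=\adj(I-M(s))/\det(I-M(s))$, locate $\lambda$ by Perron--Frobenius monotonicity of $\sigma\mapsto\rho(M(\sigma))$, show the pole at $\lambda$ is simple with residue $Q_{ij}$ via Jacobi's formula, exclude further zeros of $\det(I-M(s))$ on $\operatorname{Re}s=\lambda$ using incommensurability, and finish with Wiener--Ikehara. The differences are in how you prove the two key lemmas, and they are worth recording. For the boundary-line lemma the paper first reduces to graphs with at most one edge between any two vertices (via a covering argument) and then runs Parry's argument: conjugate $M(\lambda)$ by the diagonal of the Perron vector to a stochastic matrix and use the equality case of convexity for an eigenvector of $S(1+\mathbf{i}t)$. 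You instead invoke Wielandt's equality theorem directly ($|B|\le A$ entrywise, $A$ irreducible, $\rho(B)=\rho(A)$ forces $B=e^{\mathbf{i}\phi}DAD^{-1}$ with $D$ diagonal unitary) and compare entries of powers; this is essentially the same underlying rigidity statement, but your version handles multigraphs without the paper's covering reduction, at the cost of quoting Wielandt's theorem rather than reproving its special case. Second, you obtain part $(ii)$ from part $(i)$ by the identity $B_{i,\alpha}(x)=N_{ij}(x)-N_{ij}(x-l(\alpha))$, whereas the paper applies Wiener--Ikehara separately to the Laplace transform of $B_{i,\alpha}$; since $B_{i,\alpha}$ is not monotone, the paper's stated Tauberian theorem does not literally apply to it, so your differencing step is actually the cleaner (and strictly more careful) route. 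Two further small points in your favor: you note that incommensurability (ruling out a single directed cycle) is what guarantees $\rho(M(0))>1$, a point the paper attributes to strong connectivity alone; and your positivity of $Q$ via $Q=vw^{T}/\lvert w^{T}M'(\lambda)v\rvert$, with the unknown scalar in $\adj(I-M(\lambda))=c\,vw^{T}$ cancelling, avoids the paper's separate determination of the sign of $\tr\left(\adj\left(I-M\left(\lambda\right)\right)\right)$.
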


\subsubsection*{Example}

Let $G$ be the directed weighted graph which appears in Figure \ref{fig:Graph-with-two}.

\begin{figure}[h]
\includegraphics[scale=0.7]{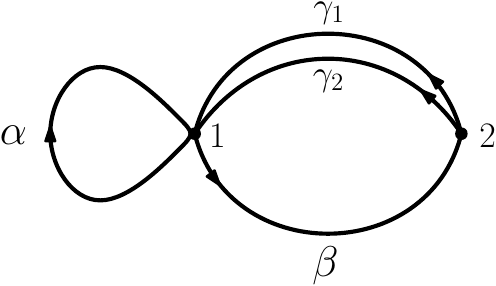}\caption{\label{fig:Graph-with-two}Graph with two vertices $\mathcal{V}=\left\{ 1,2\right\} $
and four edges $\mathcal{E}=\left\{ \alpha,\beta,\gamma_{1},\gamma_{2}\right\} $.}
\end{figure}
\noindent
The graph matrix function of $G$ is given by
\[
M\left(s\right)=\left(\begin{array}{cc}
e^{-l\left(\alpha\right)s} & e^{-l\left(\beta\right)s}\\
e^{-l\left(\gamma_{1}\right)s}+e^{-l\left(\gamma_{2}\right)s} & 0
\end{array}\right).
\]
Putting for example
\begin{eqnarray*}
l\left(\alpha\right)=\log2, &  & l\left(\gamma_{1}\right)=\log\frac{3}{2},\\
l\left(\beta\right)=\log2, &  & l\left(\gamma_{2}\right)=\log3,
\end{eqnarray*}
we get $\lambda=1$ and
\[
Q=\frac{6}{\log432}\left(\begin{array}{cc}
1 & \frac{1}{2}\\
1 & \frac{1}{2}
\end{array}\right),
\]
and so by the second part of Theorem \ref{main result 1}, the number
of paths of length exactly $x$ from vertex $1$ to a point on the
edge $\gamma_{2}$ grows as
\[
\frac{1-e^{-l\left(\gamma_{2}\right)\lambda}}{\lambda}Q_{12}e^{\lambda x}+o\left(e^{\lambda x}\right)=\frac{e^{x}}{\log\sqrt{432}}+o\left(e^{x}\right),\quad x\rightarrow\infty.
\]

\subsection{Weighted random walks on graphs}

Let $\alpha\in\mathcal{E}$ be an edge which originates at $i\in\mathcal{V}$.
Denote by $p_{i\alpha}>0$ the probability that a walker who is passing
through vertex $i$ chooses to continue his walk through edge $\alpha$,
and assume that the sum of the probabilities over all edges originating
at a given vertex is less than or equal to $1$, for all vertices
in $G$. Let $\alpha_{1},\ldots,\alpha_{k}$ be the edges connecting
vertex $i$ to vertex $j$. The graph probability matrix function
$N:\mathbb{C}\rightarrow M_{n}\left(\mathbb{C}\right)$ is defined
by
\[
N_{ij}\left(s\right)=p_{i\alpha_{_{1}}}e^{-s\cdot l\left(\alpha_{_{1}}\right)}+\cdots+p_{i\alpha_{k}}e^{-s\cdot l\left(\alpha_{k}\right)}
\]
and $N_{ij}\left(s\right)=0$ if $i$ is not connected to $j$ by
an edge. Note that the restriction of $N$ to $\mathbb{R}$ is real
valued. If the sum of the probabilities over all edges originating
at a given vertex is strictly less than $1$, there is a positive
probability that the walker does not choose any of the edges and instead
leaves the graph.
\begin{thm}
\label{Main result 2}Let $G$ be a strongly connected incommensurable
graph, and consider a walker on $G$ advancing at constant speed $1$.
There exist a non-positive constant $\lambda$ and a matrix $Q\in M_{n}\left(\mathbb{R}\right)$
with positive entries such that


$\left(ii\right)$\footnote{Previous versions of this paper included a result that appeared as part $\left(i\right)$ of Theorem 2. We have found a mistake in the formulation of this result, and it is therefore omitted from this version.}  Let $\alpha\in\mathcal{E}$ be an edge in $G$
which originates in vertex $j\in\mathcal{V}$. The probability that
a walker who has left some vertex $i\in\mathcal{V}$ at time $t=0$
is on the edge $\alpha\in\mathcal{E}$ at time $t=T$, where $\alpha$
originates at $j$ and has probability $p_{j\alpha}$, decays as
\[
p_{j\alpha}\frac{1-e^{-l\left(\alpha\right)\lambda}}{\lambda}Q_{ij}e^{\lambda T}+o\left(e^{\lambda T}\right),\quad T\rightarrow\infty
\]
whenever $\lambda<0$. In the case $\lambda=0$, the probability tends
to
\[
p_{j\alpha}l\left(\alpha\right)Q_{ij},\quad T\rightarrow\infty.
\]
As in the previous theorem, the constant $\lambda$ is the maximal
real value for which the spectral radius of $N$ is equal to $1$,
and
\[
Q=Q\left(N\left(\lambda\right)\right)=\frac{\adj\left(I-N\left(\lambda\right)\right)}{-\tr\left(\adj\left(I-N\left(\lambda\right)\right)\cdot N^{\prime}\left(\lambda\right)\right)}.
\]
\end{thm}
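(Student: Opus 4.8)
The plan is to deduce all four statements from one analytic object, the matrix resolvent $R(s)=\left(I-N(s)\right)^{-1}$, read as the Laplace transform of a family of weighted path-counting measures, together with the location of its rightmost singularity. For a directed path $\gamma$ in $G$ from $i$ to $j$ let $p(\gamma)$ denote the product of the transition probabilities $p_{k\alpha}$ along the edges of $\gamma$, and set $\mu_{ij}=\sum_{\gamma\colon i\to j}p(\gamma)\,\delta_{l(\gamma)}$, a nonnegative atomic measure on $[0,\infty)$. Since the $(i,j)$ entry of $N(s)^{m}$ equals $\sum_{\gamma\colon i\to j,\ |\gamma|=m}p(\gamma)e^{-s\,l(\gamma)}$, summing the Neumann series gives $R_{ij}(s)=\int_{0}^{\infty}e^{-sT}\,d\mu_{ij}(T)$, the bound $\lVert N(s)^{m}\rVert\le C\rho(N(\Re s))^{m}$ showing that this holds on $\{\Re s>\lambda\}$ with $\lambda$ as below. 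In these terms, the probability that a walker started at $i$ sits at the vertex $j$ around time $T$ is governed by the mass of $\mu_{ij}$ near $T$, and the probability that it is on the edge $\alpha$ emanating from $j$ at time $T$ is $p_{j\alpha}\,\mu_{ij}\!\left((T-l(\alpha),T]\right)$ --- the walker was at $j$ at some instant in the preceding $l(\alpha)$ units of time and then chose $\alpha$ --- up to boundary atoms treated separately when $\lambda=0$.

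First I would pin down $\lambda$ and $Q$. For real $s$, $N(s)$ is nonnegative and, $G$ being strongly connected, irreducible, so by Perron--Frobenius $\rho(N(s))$ is a simple eigenvalue with strictly positive left and right eigenvectors; it is strictly decreasing in $s$, tends to $\infty$ as $s\to-\infty$ and to $0$ as $s\to+\infty$, so there is a unique --- and largest --- real $\lambda$ with $\rho(N(\lambda))=1$. As every row sum of $N(0)$ is at most $1$, $\rho(N(0))\le1$, hence $\lambda\le0$, with $\lambda=0$ exactly when $N(0)$ is stochastic. Since $1$ is a simple eigenvalue of $N(\lambda)$, $\det(I-N(s))$ vanishes to first order at $s=\lambda$; Jacobi's formula $\tfrac{d}{ds}\det(I-N(s))=-\tr\!\left(\adj(I-N(s))\,N'(s)\right)$ together with Cramer's rule $R(s)=\adj(I-N(s))/\det(I-N(s))$ then shows each $R_{ij}$ has a simple pole at $s=\lambda$ of residue exactly the entry $Q_{ij}$ displayed in the statement, while Perron--Frobenius gives $\adj(I-N(\lambda))=c\,vu^{\mathsf T}$ with $c>0$ and $v,u$ the positive right and left Perron eigenvectors, so $Q$ is rank one with strictly positive entries.

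The crux, and what I expect to be the main obstacle, is that $s=\lambda$ is the \emph{only} singularity of $R$ on the line $\Re s=\lambda$ --- equivalently, $\det(I-N(\lambda+it))\ne0$ for all real $t\ne0$ --- and that $R$ continues meromorphically slightly past this line with no further poles. If $1$ were an eigenvalue of $N(\lambda+it)$ then, since $\lvert N(\lambda+it)_{jk}\rvert\le N(\lambda)_{jk}$, a Wielandt-type comparison with the Perron data at $s=\lambda$ would force, after conjugation by a diagonal unitary, equality throughout, hence $t\,l(\alpha)\in2\pi\mathbb{Z}$ for every edge $\alpha$ lying on a closed orbit, and running around closed orbits together with strong connectivity would force $t\,c\in2\pi\mathbb{Z}$ for the length $c$ of \emph{every} closed orbit of $G$. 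Incommensurability of $G$ says precisely that the closed-orbit lengths are not all integer multiples of one period, so no such $t\ne0$ exists. Only the non-negativity and the irreducible incidence pattern of the matrix enter, so this is the same argument one uses for $M$ in Theorem~\ref{main result 1}, transported to $N$.

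Finally I would feed the pole structure into a Tauberian theorem of Wiener--Ikehara / Ikehara--Delange type for Laplace--Stieltjes transforms of nonnegative measures, whose hypotheses --- convergence for $\Re s>\lambda$ and continuation to $\Re s=\lambda$ with a single simple pole --- are now available. When $\lambda<0$, $\mu_{ij}$ is finite with total mass $R_{ij}(0)$, and the tail $T\mapsto\mu_{ij}((T,\infty))$ has Laplace transform $\left(R_{ij}(0)-R_{ij}(s)\right)/s$, regular at $s=0$ and with a simple pole at $s=\lambda$ of residue $-Q_{ij}/\lambda$; the theorem gives $\mu_{ij}((T,\infty))=-(Q_{ij}/\lambda)e^{\lambda T}+o(e^{\lambda T})$, whence assertion (ii) via $\mu_{ij}((T-l(\alpha),T])=\mu_{ij}((T-l(\alpha),\infty))-\mu_{ij}((T,\infty))=\tfrac{1-e^{-l(\alpha)\lambda}}{\lambda}Q_{ij}e^{\lambda T}+o(e^{\lambda T})$, and assertion (i) similarly from the increments of the tail. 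When $\lambda=0$, $\mu_{ij}$ is infinite but $R_{ij}(s)=Q_{ij}/s+O(1)$ near $s=0$, so the Laplace transform $R_{ij}(s)/s$ of $F_{ij}(T):=\mu_{ij}([0,T])$ has a double pole $Q_{ij}/s^{2}+O(1/s)$ at $s=0$ and no other singularity on $\Re s=0$; hence $F_{ij}(T)\sim Q_{ij}T$, and $p_{j\alpha}\!\left(F_{ij}(T)-F_{ij}(T-l(\alpha))\right)\to p_{j\alpha}\,l(\alpha)\,Q_{ij}$. A last routine step reconciles the half-open versus closed conventions in ``on the edge $\alpha$'', the boundary atoms being of lower order, with the stated constants.
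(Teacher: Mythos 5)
Your route is the paper's: read $\left(I-N(s)\right)^{-1}=\adj\left(I-N(s)\right)/\det\left(I-N(s)\right)$ as the Laplace transform of the path measures via the Neumann series, use Perron--Frobenius for real $s$ (strict monotonicity of the spectral radius, row sums of $N(0)$ at most $1$) to get the unique $\lambda\le0$ with $\rho(N(\lambda))=1$, obtain the simple pole and the residue $Q_{ij}$ from Jacobi's formula and the positivity of $\adj\left(I-N(\lambda)\right)$, rule out zeros of $\det\left(I-N(\lambda+\mathbf{i}t)\right)$ for $t\neq0$ by a Wielandt/Parry rigidity argument that telescopes phases around closed orbits and contradicts incommensurability, and finish with Wiener--Ikehara. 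One slip in the rigidity step: entrywise equality after the diagonal-unitary conjugation gives $e^{-\mathbf{i}tl(\alpha)}=d_j\overline{d_k}$ per edge, not $t\,l(\alpha)\in2\pi\mathbb{Z}$ per edge; only the product around a closed orbit is free of the diagonal factors --- which is what you actually use, so no harm. For $\lambda<0$, your handling of (ii) by differencing the monotone tail $T\mapsto\mu_{ij}\left((T,\infty)\right)$ is correct, and arguably cleaner than the paper's direct application of the Tauberian theorem to the non-monotone $D_{i,\alpha}$, since it stays within the stated hypotheses.

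Two steps do not work as written. First, in the case $\lambda=0$ you infer $F_{ij}(T)-F_{ij}(T-l(\alpha))\rightarrow l(\alpha)Q_{ij}$ from $F_{ij}(T)=Q_{ij}T+o(T)$; this is invalid, because the $o(T)$ errors swamp the bounded main-term difference. The limit here is a local (Blackwell/key-renewal type) statement, strictly stronger than $F_{ij}(T)\sim Q_{ij}T$. The paper gets it by applying the Tauberian theorem to $D_{i,\alpha}$ itself, whose transform $p_{j\alpha}\frac{1-e^{-l(\alpha)s}}{s}\cdot\frac{\left(\adj\left(I-N(s)\right)\right)_{ij}}{\det\left(I-N(s)\right)}$ has a simple pole at $s=0$ with residue $p_{j\alpha}l(\alpha)Q_{ij}$, the factor $\frac{1-e^{-l(\alpha)s}}{s}$ being entire with value $l(\alpha)$ at $0$; you need this (or an equivalent local renewal statement), not the integrated asymptotic. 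Second, assertion (i) is about individual atoms $\mu_{ij}\left(\{T\}\right)$, and increments of the tail cannot produce it: from $\mu_{ij}\left((T,\infty)\right)=-\frac{Q_{ij}}{\lambda}e^{\lambda T}+o\left(e^{\lambda T}\right)$ one only gets $\mu_{ij}\left(\{T\}\right)=o\left(e^{\lambda T}\right)$ by shrinking the window, never a positive main term. The paper proves (i) by feeding the transform $\frac{\left(\adj\left(I-N(s)\right)\right)_{ij}}{\det\left(I-N(s)\right)}$ of the atomic sum $C_{i,j}$ directly into Wiener--Ikehara; if you follow your measure-theoretic setup you must either do the same (making precise which Stieltjes/measure version of the theorem you invoke for a purely atomic measure) or reformulate (i), but it cannot be extracted from the cumulative or tail functions.
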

As a direct corollary we have
\begin{cor}
In the settings of the previous theorem, denote by $\mathcal{E}\left(j\right)$
the set of edges in $G$ with origin at vertex $j\in\mathcal{V}$.
The probability that a walker who has left vertex $i\in\mathcal{V}$
at time zero is still on the graph $G$ at time $T$ decays as
\[
\sum_{j\in\mathcal{V}}\sum_{\alpha\in\mathcal{E}\left(j\right)}p_{j\alpha}\frac{1-e^{-l\left(\alpha\right)\lambda}}{\lambda}Q_{ij}e^{\lambda T}+o\left(e^{\lambda T}\right),\quad T\rightarrow\infty
\]
whenever $\lambda<0$.
\end{cor}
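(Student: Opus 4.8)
The statement follows directly from part $(ii)$ of Theorem~\ref{Main result 2} by decomposing over the edges of $G$; I sketch the short argument. Throughout, write $\mathbb{P}_i(\cdot)$ for the probability of an event concerning a walker that has left vertex $i$ at time $t=0$. The plan is first to express the event ``the walker is still on $G$ at time $T$'' as the disjoint union, over $\alpha\in\mathcal{E}$, of the events ``the walker is on the edge $\alpha$ at time $T$''. Indeed, if the walker has not left $G$ by time $T$ then at time $T$ it occupies some point of $G$, and --- with the exception of the countable set of times at which the walker can sit exactly at a vertex (compare the restriction on $T$ in part $(i)$ of Theorem~\ref{Main result 2}) --- that point lies in the interior of a unique edge. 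Since $\bigcup_{j\in\mathcal{V}}\mathcal{E}(j)=\mathcal{E}$, this gives
\[
\mathbb{P}_i\left(\text{still on }G\text{ at time }T\right)=\sum_{\alpha\in\mathcal{E}}\mathbb{P}_i\left(\text{on }\alpha\text{ at time }T\right)=\sum_{j\in\mathcal{V}}\sum_{\alpha\in\mathcal{E}(j)}\mathbb{P}_i\left(\text{on }\alpha\text{ at time }T\right).
\]

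Next I would substitute, for each edge $\alpha\in\mathcal{E}(j)$ and for $\lambda<0$, the asymptotic provided by part $(ii)$ of Theorem~\ref{Main result 2},
\[
\mathbb{P}_i\left(\text{on }\alpha\text{ at time }T\right)=p_{j\alpha}\,\frac{1-e^{-l(\alpha)\lambda}}{\lambda}\,Q_{ij}\,e^{\lambda T}+o\left(e^{\lambda T}\right),\qquad T\to\infty,
\]
and then add these estimates. Because $\mathcal{E}$ is a finite set, a finite sum of error terms each of which is $o(e^{\lambda T})$ is again $o(e^{\lambda T})$; hence the main terms add up to $\sum_{j\in\mathcal{V}}\sum_{\alpha\in\mathcal{E}(j)}p_{j\alpha}\frac{1-e^{-l(\alpha)\lambda}}{\lambda}Q_{ij}e^{\lambda T}$, which is exactly the expression claimed, and the remainder is $o(e^{\lambda T})$.

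The only point that is not purely formal is the bookkeeping of the decomposition at the vertices of $G$: when the walker sits exactly at a vertex $j$, one must distribute this configuration among the edges $\alpha\in\mathcal{E}(j)$ in a way consistent with the factor $p_{j\alpha}$ appearing in Theorem~\ref{Main result 2}$(ii)$ (equivalently, one restricts attention to the co-countable set of times $T$ at which the walker lies in an edge interior, where the decomposition above is an exact partition). I expect this to be the only --- and minor --- obstacle; with it in place the corollary is an immediate consequence of Theorem~\ref{Main result 2}.
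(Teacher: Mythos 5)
Your argument is exactly the one the paper intends: the corollary is stated there as a direct consequence of Theorem \ref{Main result 2}$(ii)$, obtained by summing the edge asymptotics over the finitely many edges $\alpha\in\mathcal{E}(j)$, $j\in\mathcal{V}$, with the finitely many $o(e^{\lambda T})$ errors absorbed into a single $o(e^{\lambda T})$. Your additional remark about the countable set of times at which the walker sits exactly at a vertex is a harmless refinement and does not change the argument.
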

\begin{rem}
It will follow from Remark \ref{gamma is zero remark} that in the
case $\lambda=0$ the matrix $N\left(0\right)$ is right stochastic
and that the probability described in the previous corollary is $1$.

The random walk defined above can be generalized by considering a
random walk on the edges, and the transition probability $p_{\beta,\alpha}$
from an edge $\alpha$ to an edge $\beta$ vanishes unless $\beta$
originates at the vertex where $\alpha$ terminates. Let $d$ be the
number of directed edges on the graph. The graph probability matrix
function $W:\mathbb{C}\rightarrow M_{d}(\mathbb{C})$ is defined by
\[
W_{\beta,\alpha}\left(s\right)=p_{\beta,\alpha}e^{-s\cdot l(\alpha)}.
\]
The analogue of Theorem 2 and its Corollary 1 in the present case
follow directly from the discussion above.
\end{rem}

\subsection{Acknowledgments}

We would like to thank Anton Malyshev, Mikhail Sodin, Yaar Solomon
and Barak Weiss for their valuable contribution to this work. In addition
we thank Thomas Jordan, Zemer Kosloff, Jens Marklof, Frédéric Paulin,
Omri Sarig, Richard Sharp and David Simmons for helpful remarks regarding
relevant results. We are thankful to the referee for several remarks
and suggestions which helped us to improve this work. The first author
is partially supported by the Israel Science Foundation, grant 382/15
and by the United States - Israel Binational Science Foundation, grant
2012037. The second author is supported by the Israel Science Foundation,
grant 2095/15.

\section{Matrices, the Theory of Perron-Frobenius and Graphs}

A real valued matrix $A\in M_{n}\left(\mathbb{R}\right)$ is called\textbf{
positive} if all entries of $A$ are positive and \textbf{non-negative}
if all entries of $A$ are non-negative. $A$ is called \textbf{primitive}
if there exists $k\in\mathbb{N}$ for which $A^{k}$ is positive and
\textbf{irreducible} if for every pair of indices $i,j$ there exists
$k\in\mathbb{N}$ for which $\left(A^{k}\right)_{ij}>0$.

\subsection{The Perron-Frobenius Theorem }

The following results are due to Perron and Frobenius (full statements
and proofs can be found in chapter XIII of \cite{key-3-1}).
\begin{thm*}
Let $A\in M_{n}\left(\mathbb{R}\right)$ be a \textbf{non-negative}
and \textbf{irreducible} matrix.

1. There exists $\mu>0$ which is a simple eigenvalue of $A$, and
$\left|\mu_{j}\right|\leq\mu$ for any other eigenvalue $\mu_{j}$.
We call $\mu$ the \textbf{Perron-Frobenius eigenvalue}.

2. There exist $v,u\in\mathbb{R}^{n}$ with positive entries such
that $Av=\mu v$ and $u^{T}A=\mu u^{T}$. Moreover every right eigenvector
with non-negative entries must be a positive multiple of $v$ \emph{(}similarly
for left eigenvectors and $u$\emph{)}.
\end{thm*}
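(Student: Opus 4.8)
The plan is to first establish the statement for a strictly positive matrix and then bootstrap to the general non-negative irreducible case. Assume for now that $A>0$. The self-map $x\mapsto Ax/\|Ax\|_{1}$ of the standard simplex $\Delta=\{x\in\mathbb{R}^{n}:x_{i}\geq0,\ \sum_{i}x_{i}=1\}$ is continuous and well defined, since $Ax$ has strictly positive entries for every $x\in\Delta$; by Brouwer's fixed point theorem it has a fixed point $v\in\Delta$, and with $\mu:=\|Av\|_{1}>0$ we get $Av=\mu v$ and then $v=\mu^{-1}Av>0$. Running the same argument for $A^{T}$ produces a positive left eigenvector $u^{T}$, $u^{T}A=\mu'u^{T}$; evaluating $u^{T}Av$ in two ways and using $u^{T}v>0$ forces $\mu'=\mu$. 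For the domination claim, if $Aw=\zeta w$ with $w\neq0$ complex, applying the triangle inequality entrywise gives $|\zeta|\,|w|\leq A|w|$, where $|w|$ is the vector of moduli; pairing with $u>0$ and dividing by $u^{T}|w|>0$ yields $|\zeta|\leq\mu$.

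Next, simplicity in the positive case. If $Aw=\mu w$ with $w$ real and not in $\mathbb{R}v$, choose an extremal $t\in\mathbb{R}$ for which $v-tw\geq0$; then $v-tw$ has a vanishing coordinate, yet $A(v-tw)=\mu(v-tw)$ together with $A>0$ and $\mu>0$ makes $v-tw$ either zero — forcing $w\in\mathbb{R}v$ — or strictly positive, a contradiction either way. Hence $\ker(\mu I-A)=\mathbb{R}v$. For algebraic simplicity, note every column of $\adj(\mu I-A)$ lies in $\ker(\mu I-A)=\mathbb{R}v$ and every row lies in the left kernel $\mathbb{R}u^{T}$, so $\adj(\mu I-A)=\beta\,vu^{T}$ for some scalar $\beta$; since $\ker(\mu I-A)$ is one-dimensional, $\mu I-A$ has rank $n-1$, so the adjugate is nonzero and $\beta\neq0$. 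Then
\[
\frac{d}{dt}\det(tI-A)\big|_{t=\mu}=\tr\adj(\mu I-A)=\beta\,(u^{T}v)\neq0,
\]
which is exactly the statement that $\mu$ is a simple root of the characteristic polynomial. (This is the same adjugate identity that underlies the formula for $Q$ stated above.)

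Now the irreducible case. If $A$ is non-negative and irreducible, then $B:=(I+A)^{n-1}>0$, because irreducibility forces, for each ordered pair $i,j$, some power $A^{k}$ with $0\leq k\leq n-1$ to have a positive $(i,j)$ entry. Apply the positive case to $B$: it has a simple Perron eigenvalue $\nu$ with positive right and left eigenvectors $v,u$. Then $v$ is an eigenvector of $A$, say $Av=\mu v$; non-negativity of $A$ gives $\mu\geq0$, while $\mu=0$ is impossible because irreducibility makes every row of $A$ nonzero, forcing $(Av)_{i}>0$. Thus $\mu>0$ with $(1+\mu)^{n-1}=\nu$, and likewise $u^{T}A=\mu u^{T}$. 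The entrywise triangle inequality together with $u>0$ gives $|\zeta|\leq\mu$ for every eigenvalue $\zeta$ of $A$. Finally, if $\zeta$ is an eigenvalue of $A$ with $(1+\zeta)^{n-1}=\nu$, writing $1+\zeta=(1+\mu)e^{i\theta}$ the bound $|\zeta|\leq\mu$ forces $\cos\theta\geq1$, hence $\zeta=\mu$; so $\mu$ is the only eigenvalue of $A$ sent to $\nu$ by $\lambda\mapsto(1+\lambda)^{n-1}$, and therefore its algebraic multiplicity for $A$ equals that of $\nu$ for $B$, namely $1$. The same comparison shows any non-negative right eigenvector of $A$ is a non-negative eigenvector of $B$, hence a positive multiple of $v$, and symmetrically for left eigenvectors.

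\textbf{Main obstacle.} Everything downstream of the positive case is elementary linear algebra; the one place where an analytic (compactness) input is genuinely needed, and hence the crux of the argument, is the existence of the positive eigenvector of a strictly positive matrix. I would obtain it from Brouwer's fixed point theorem as above; the classical alternative is the Collatz--Wielandt variational characterization $\mu=\max_{x\in\Delta}\min_{\{i:\,x_{i}>0\}}(Ax)_{i}/x_{i}$, and either route ultimately rests on verifying that the relevant extremum is attained on the compact simplex.
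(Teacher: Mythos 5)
The paper does not prove this statement at all: it quotes it as the classical Perron--Frobenius theorem and points to Gantmacher, so there is no in-paper argument to compare yours against. Your write-up is a correct, essentially self-contained rendition of one of the standard routes: Brouwer's fixed point theorem on the simplex for a strictly positive matrix; the adjugate identity $\adj\left(\mu I-A\right)=\beta\,vu^{T}$ together with $\tr\,\adj\left(\mu I-A\right)=\frac{d}{dt}\det\left(tI-A\right)\vert_{t=\mu}$ to upgrade geometric to algebraic simplicity (the same identity the paper itself exploits for the Perron projection); and Wielandt's trick $B=\left(I+A\right)^{n-1}>0$ to pass from the positive to the irreducible case. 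The individual steps check out: the extremal-$t$ argument for one-dimensionality of the kernel, the pairing with $u>0$ for $\left|\zeta\right|\leq\mu$, the computation forcing $\cos\theta\geq1$, and the multiplicity comparison are all sound.

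Three steps are asserted rather than argued and each needs a line. First, ``then $v$ is an eigenvector of $A$'' holds because $A$ commutes with $B$ (a polynomial in $A$), so $Av$ lies in the $\nu$-eigenspace of $B$, which you have shown equals $\mathbb{R}v$; without the commutation remark this claim is unsupported. Second, the closing ``non-negative eigenvector of $B$, hence a positive multiple of $v$'' invokes a fact about positive matrices you never recorded in the positive case: if $Bw=\zeta w$ with $w\geq0$, $w\neq0$, then pairing with the positive left eigenvector gives $\zeta\,u^{T}w=\nu\,u^{T}w$ with $u^{T}w>0$, so $\zeta=\nu$, and then $w\in\mathbb{R}v$ by the simplicity you proved; state this before using it for $A$. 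Third, the assertion that the algebraic multiplicity of $\mu$ for $A$ equals that of $\nu$ for $B$ is the spectral mapping theorem for the polynomial $p\left(\lambda\right)=\left(1+\lambda\right)^{n-1}$ (triangularize $A$ over $\mathbb{C}$ and count), and should be named. With these one-line patches the argument is complete and proves exactly the version of the theorem the paper uses.
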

\begin{thm*}
Let $A\in M_{n}\left(\mathbb{R}\right)$ be a \textbf{primitive} matrix.

1. There exists $\mu>0$ which is a simple eigenvalue of $A$, and
$\left|\mu_{j}\right|<\mu$ for any other eigenvalue $\mu_{j}$. We
call $\mu$ the \textbf{Perron-Frobenius eigenvalue}.

2. There exist $v,u\in\mathbb{R}^{n}$ with positive entries such
that $Av=\mu v$ and $u^{T}A=\mu u^{T}$. Moreover every right eigenvector
with non-negative entries must be a positive multiple of $v$ \emph{(}similarly
for left eigenvectors and $u$\emph{)}.

3. The following limit holds
\[
\lim_{k\rightarrow\infty}\left(\frac{1}{\mu}A\right)^{k}=\frac{vu^{T}}{u^{T}v}.
\]
The limit matrix $P=\frac{vu^{T}}{u^{T}v}$ is called the \textbf{Perron
projection} of $A$.
\end{thm*}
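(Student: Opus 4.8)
The plan is to deduce the primitive case entirely from the irreducible Perron--Frobenius theorem stated above, using primitivity only to sharpen the bound $|\mu_{j}|\le\mu$ into the strict inequality $|\mu_{j}|<\mu$. Since a primitive matrix is irreducible (if $A^{m}$ is positive then $(A^{m})_{ij}>0$ for every pair $i,j$), the irreducible theorem already supplies a simple eigenvalue $\mu=\rho(A)>0$ with positive right and left eigenvectors $v,u$, the uniqueness statement for nonnegative eigenvectors, and $|\mu_{j}|\le\mu$ for every eigenvalue $\mu_{j}$. After normalizing so that $u^{T}v=1$, only the strict spectral gap in part $1$ is left to prove; part $2$ is the irreducible theorem verbatim.

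The first step is an elementary lemma for a genuinely \emph{positive} matrix $B$: if $Bw=\nu w$ with $w\neq 0$ and $|\nu|=\rho(B)$, then $\nu=\rho(B)$ and $w$ is a scalar multiple of a Perron eigenvector $v_{B}$ of $B$. Taking entrywise moduli, $\rho(B)\,|w|=|Bw|\le B|w|$, so $y:=B|w|-\rho(B)|w|$ is a nonnegative vector; pairing it with the positive \emph{left} Perron eigenvector $u_{B}$ of $B$ and using $u_{B}^{T}B=\rho(B)u_{B}^{T}$ gives $u_{B}^{T}y=0$, hence $y=0$. Thus $|w|$ is a nonnegative eigenvector of $B$ for $\rho(B)$, so it is a positive multiple of $v_{B}$; rescaling to $|w|=v_{B}>0$ and reading the equality $|Bw|=B|w|$ off coordinate by coordinate, the strict positivity of all $B_{ij}$ forces every coordinate of $w$ to share one common phase $e^{i\theta}$, so $w=e^{i\theta}v_{B}$ and $Bw=\nu w$ gives $\nu=\rho(B)$.

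I would then apply this to $B:=A^{m}$, which is positive by primitivity. Its eigenvalues are the $m$-th powers of those of $A$, so $\rho(B)=\mu^{m}$, and since $\mu^{m}$ is an eigenvalue of $B$ (with eigenvector $v$) it is the Perron eigenvalue of $B$; applying the irreducible theorem to the irreducible matrix $B$, it is in fact a \emph{simple} eigenvalue of $B$, with eigenspace $\span\{v\}$. Now let $\mu_{j}$ be any eigenvalue of $A$ with $|\mu_{j}|=\mu$, say $Aw=\mu_{j}w$ with $w\neq 0$. Then $Bw=\mu_{j}^{m}w$ and $|\mu_{j}^{m}|=\mu^{m}=\rho(B)$, so the lemma gives $\mu_{j}^{m}=\mu^{m}$ and $w\in\span\{v\}$; substituting $w=cv$ into $Aw=\mu_{j}w$ yields $Av=\mu_{j}v$, and comparison with $Av=\mu v$ forces $\mu_{j}=\mu$. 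This is exactly part $1$.

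For part $3$, set $C:=\tfrac{1}{\mu}A$, so that $1$ is a simple eigenvalue of $C$ and every other eigenvalue of $C$ has modulus $<1$. With $u^{T}v=1$ put $P:=vu^{T}$; using $Av=\mu v$ and $u^{T}A=\mu u^{T}$ one checks $P^{2}=P$ and $CP=PC=P$, so $I-P$ is a projection commuting with $C$ and $(C-P)^{k}=C^{k}(I-P)^{k}=C^{k}(I-P)=C^{k}-P$, i.e. $C^{k}=P+(C-P)^{k}$. The operator $C-P=C(I-P)$ has spectrum $\{0\}\cup\{\mu_{j}/\mu:\mu_{j}\neq\mu\}$, hence spectral radius $r<1$, so the Jordan form gives $\|(C-P)^{k}\|\le c\,k^{\,n-1}r^{k}\to 0$; therefore $C^{k}\to P=\frac{vu^{T}}{u^{T}v}$, the asserted limit. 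The main obstacle is the strict inequality in part $1$: this is precisely where primitivity, as opposed to mere irreducibility (which permits a cyclic family of eigenvalues of maximal modulus), is indispensable, and the delicate point is extracting equality from the triangle inequality $|Bw|\le B|w|$ through the common-phase argument. Once the spectral gap is in place, part $3$ is routine linear algebra.
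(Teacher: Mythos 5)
Your proof is correct. Note that the paper itself does not prove this statement at all: it is quoted as a classical theorem with a pointer to Gantmacher (chapter XIII of \cite{key-3-1}), so there is no in-paper argument to compare with. Your route is the standard and self-contained one: primitivity implies irreducibility, so the irreducible Perron--Frobenius theorem (also quoted in the paper) already gives parts 1--2 except for the strict spectral gap; the gap is obtained by passing to the positive matrix $B=A^{m}$ and proving the equality case of $|Bw|\le B|w|$ via the left Perron vector of $B$ and the common-phase argument, and your trick of substituting $w=cv$ back into $Aw=\mu_{j}w$ cleanly avoids the root-of-unity ambiguity in $\mu_{j}^{m}=\mu^{m}$. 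Part 3 via $C^{k}=P+(C-P)^{k}$ with $P=vu^{T}/(u^{T}v)$ is also fine; the only step stated without justification is that the spectrum of $C-P$ is $\{0\}\cup\{\mu_{j}/\mu:\mu_{j}\neq\mu\}$, which deserves one line: $P$ commutes with $C$, so $\mathbb{C}^{n}$ splits into the $C$-invariant subspaces $\mathrm{range}(P)=\mathrm{span}\{v\}$ (on which $C-P$ acts as $0$) and $\ker(P)$ (on which $C-P$ acts as $C$, whose restricted spectrum omits the simple eigenvalue $1$), and then the Jordan-form bound $\|(C-P)^{k}\|\le c\,k^{n-1}r^{k}$ with $r<1$ finishes the argument.
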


\subsection{Perron's projection}

Given an irreducible matrix $A$, there are additional ways to represent
its Perron projection $P$, as shown bellow
\begin{lem}
\label{Perron =00003D adj and trace}Let $A$ be an irreducible matrix
with Perron-Frobenius eigenvalue $\mu$ and a Perron projection $P$.
Then
\[
P=\frac{\adj\left(\mu I-A\right)}{\tr\left(\adj\left(\mu I-A\right)\right)}.
\]
\end{lem}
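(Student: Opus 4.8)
The plan is to exploit the identity $B\,\adj(B) = \adj(B)\,B = \det(B)\,I$ together with the simplicity of the Perron--Frobenius eigenvalue. Set $B = \mu I - A$. Since $\mu$ is an eigenvalue of $A$ we have $\det(B) = 0$, so the identity degenerates to $B\,\adj(B) = \adj(B)\,B = 0$. Reading the first equation column by column shows that every column of $\adj(B)$ lies in $\ker(\mu I - A)$, and reading the second equation row by row shows that every row of $\adj(B)$ lies in the left null space of $\mu I - A$.

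Next I would invoke the Perron--Frobenius theorem for irreducible matrices: $\mu$ is a \emph{simple} eigenvalue, hence its geometric multiplicity is $1$, so $\ker(\mu I - A) = \span\{v\}$ where $v$ is the positive right Perron eigenvector, and likewise the left null space of $\mu I - A$ is $\span\{u\}$ for the positive left Perron eigenvector $u$. Consequently each column of $\adj(B)$ is a scalar multiple of $v$, so $\adj(B) = v\,w^T$ for some $w \in \mathbb{R}^n$; but then the $i$-th row of $\adj(B)$ equals $v_i\, w^T$, and since $v_i \neq 0$ while this row must be a multiple of $u^T$, we conclude $w$ is a multiple of $u$. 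Thus $\adj(\mu I - A) = c\, v u^T$ for some scalar $c$.

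It remains to pin down the normalization. Because $\mu$ is simple, $\rank(\mu I - A) = n-1$, so some $(n-1)\times(n-1)$ minor of $\mu I - A$ is nonzero, whence $\adj(\mu I - A) \neq 0$ and therefore $c \neq 0$. Taking traces, $\tr\bigl(\adj(\mu I - A)\bigr) = c\,\tr(v u^T) = c\, u^T v$, and $u^T v > 0$ since $u$ and $v$ have positive entries. Dividing,
\[
\frac{\adj(\mu I - A)}{\tr\bigl(\adj(\mu I - A)\bigr)} = \frac{c\, v u^T}{c\, u^T v} = \frac{v u^T}{u^T v} = P,
\]
which is the asserted formula.

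The argument is essentially routine; the only point requiring attention is the rank computation guaranteeing $\adj(\mu I - A) \neq 0$ (equivalently, that the normalizing trace does not vanish), and this is precisely where the \emph{simplicity} of $\mu$ is used rather than mere irreducibility. Alternatively one may identify $\tr\bigl(\adj(\mu I - A)\bigr)$ with $\frac{d}{dt}\det(tI - A)\bigr|_{t=\mu}$, whose nonvanishing is the classical characterization of a simple eigenvalue.
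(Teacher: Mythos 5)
Your proof is correct and follows essentially the same route as the paper: use $(\mu I-A)\adj(\mu I-A)=\adj(\mu I-A)(\mu I-A)=0$ together with the simplicity of $\mu$ to conclude that $\adj(\mu I-A)$ is a scalar multiple of $vu^{T}$, and then normalize by the trace. Your explicit verification that the normalizing trace is nonzero (via $\mathrm{rank}(\mu I-A)=n-1$, so $\adj(\mu I-A)\neq 0$) is a point the paper leaves implicit, and is a welcome addition.
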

\begin{proof}
Let $v$ and $u$ be eigenvectors as in the Perron-Frobenius theorem.
The columns of $P$ are scalar multiples of $v$, and the rows of
$P$ are scalar multiples of $u^{T}$, and so the column space of
$P$ is spanned by $v$ and the row space by $u$. Denote by $V$
the subspace of $M_{n}\left(\mathbb{R}\right)$ consisting of matrices
with these row and column spaces, and notice that $\dim V=1$. Since
$\mu$ is an eigenvalue of $A$ we have
\[
\left(\mu I-A\right)\cdot\adj\left(\mu I-A\right)=\det\left(\mu I-A\right)I=0,
\]
and so every column of $\adj\left(\mu I-A\right)$ is an eigenvector
of $A$ corresponding to $\mu$. By the Perron-Frobenius theorem all
columns of $\adj\left(\mu I-A\right)$ must be scalar multiples of
$v$ and so the column space of $\adj\left(\mu I-A\right)$ is spanned
by $v$. Similarly, using $\adj\left(\mu I-A\right)\cdot\left(\mu I-A\right)=0$
we deduce that the row space of $\adj\left(\mu I-A\right)$ is spanned
by $u$, and so $\adj\left(\mu I-A\right)\in V$. Since $V$ is one
dimensional $\adj\left(\mu I-A\right)=\alpha P$ for some $\alpha\in\mathbb{R}$.
Next, since $Pv=v$, and $Pw=0$ for every $w\in\left(\span\left\{ u\right\} \right)^{\perp}$,
the Perron projection $P$ is similar to the matrix $\diag\left(1,0,\ldots,0\right)$.
Therefore $\tr P=1$, and so
\[
\tr\left(\adj\left(\mu I-A\right)\right)=\tr\left(\alpha P\right)=\alpha\tr P=\alpha,
\]
finishing the proof.
\end{proof}
\begin{cor}
\label{perron projection with derivative of determinant}Let $p_{A}$
be the characteristic polynomial of $A$, then
\[
P=\frac{\adj\left(\mu I-A\right)}{\frac{d}{dx}p_{A}\left(x\right)\vert_{x=\mu}}.
\]
\end{cor}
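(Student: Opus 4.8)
The plan is to combine the previous lemma with the classical identity $p_A'(x)=\tr\left(\adj\left(xI-A\right)\right)$, where we take the standard convention $p_A(x)=\det\left(xI-A\right)$ for the characteristic polynomial (this convention is the one that makes the stated formula hold verbatim; with the opposite sign convention an extra factor $(-1)^n$ would appear). By the previous lemma we already know that
\[
P=\frac{\adj\left(\mu I-A\right)}{\tr\left(\adj\left(\mu I-A\right)\right)},
\]
so it suffices to verify $\tr\left(\adj\left(\mu I-A\right)\right)=p_A'(\mu)$ and substitute this into the denominator.

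To prove the identity $p_A'(x)=\tr\left(\adj\left(xI-A\right)\right)$, I would use multilinearity of the determinant in its rows. Write $B(x)=xI-A$ and let $r_1(x),\ldots,r_n(x)$ be its rows; only the diagonal entry of $r_i(x)$ depends on $x$, and $\frac{d}{dx}r_i(x)=e_i^{T}$, the $i$-th standard row vector. Differentiating $\det B(x)=\det\bigl(r_1(x),\ldots,r_n(x)\bigr)$ by the Leibniz rule for multilinear maps gives
\[
p_A'(x)=\sum_{i=1}^{n}\det\bigl(r_1(x),\ldots,r_{i-1}(x),e_i^{T},r_{i+1}(x),\ldots,r_n(x)\bigr).
\]
Expanding the $i$-th summand along its $i$-th row leaves precisely the $(i,i)$ cofactor of $B(x)$, which by definition of the adjugate equals $\left(\adj B(x)\right)_{ii}$. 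Summing over $i$ yields $\sum_{i=1}^{n}\left(\adj\left(xI-A\right)\right)_{ii}=\tr\left(\adj\left(xI-A\right)\right)$. (Equivalently, one may simply invoke Jacobi's formula $\frac{d}{dx}\det B(x)=\tr\left(\adj\left(B(x)\right)B'(x)\right)$ with $B(x)=xI-A$ and $B'(x)=I$.)

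Finally, evaluate at $x=\mu$. Since $\mu$ is a simple eigenvalue of the irreducible matrix $A$ by the Perron--Frobenius theorem, $p_A'(\mu)\neq0$, so the denominator is nonzero, consistent with the nonvanishing of $\tr\left(\adj\left(\mu I-A\right)\right)$ established in the previous lemma, and the two expressions for $P$ agree. I do not expect any genuine obstacle here: the argument is a one-line consequence of the lemma together with a standard determinant-differentiation identity, and the only point requiring care is fixing the sign convention $p_A(x)=\det\left(xI-A\right)$.
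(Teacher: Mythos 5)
Your proposal is correct and follows essentially the same route as the paper: both reduce the corollary to the identity $p_A'(\mu)=\tr\left(\adj\left(\mu I-A\right)\right)$ obtained from Jacobi's formula with $B(x)=xI-A$ (you additionally spell out a self-contained row-by-row differentiation proof of that identity, which the paper simply cites). The sign-convention remark and the nonvanishing of $p_A'(\mu)$ via simplicity of $\mu$ are fine and consistent with the preceding lemma.
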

\begin{proof}
Jacobi's formula for the derivative of the determinant of a matrix
is given by
\[
\frac{d}{dx}\det B\left(x\right)=\tr\left(\adj\left(B\left(x\right)\right)B'\left(x\right)\right)
\]
and so using this formula, the corollary follows from the previous
lemma for $B\left(x\right)=xI-A$.
\end{proof}
\begin{rem}
This result and others concerning the theory of Perron-Frobenius may
be found in \cite{key-12362536246246}. Another proof for Corollary
\ref{perron projection with derivative of determinant} can be derived
by direct computation using the following identity
\[
\adj\left(\lambda I-A\right)=A^{n-1}+\left(\lambda+p_{n-1}\right)A^{n-2}+\cdots+\left(\lambda^{n-1}+p_{n-1}\lambda^{n-2}+\cdots+p_{1}\right)I
\]
where $p_{A}\left(x\right)=x^{n}+p_{n-1}x^{n-1}+\cdots+p_{0}$ and
$\lambda\in\mathbb{R}$ (see \cite{key-3-1} or \cite{key-1-2}).
Let $\mu$ be the Perron-Frobenius eigenvalue and $v$ a corresponding
eigenvector, we compute
\[
\begin{array}{rl}
\adj\left(\mu I-A\right)v & =\left[\mu^{n-1}+\left(\mu+p_{n-1}\right)\mu^{n-2}+\cdots+\left(\mu^{n-1}+p_{n-1}\mu^{n-2}+\cdots+p_{1}\right)\right]v\\
 & =\left[n\mu^{n-1}+\left(n-1\right)p_{n-2}\mu^{n-3}+\cdots+p_{1}\right]v\\
 & =p_{A}'\left(\mu\right)v.
\end{array}
\]
Recall that $\adj\left(\mu I-A\right)=\alpha P$ for some $\alpha\in\mathbb{R}$.
Since
\[
p_{A}'\left(\mu\right)v=\adj\left(\mu I-A\right)v=\alpha Pv=\alpha v,
\]
it follows that $\alpha=\tr\left(\adj\left(\mu I-A\right)\right)=p_{A}'\left(\mu\right)$.
\end{rem}
\begin{cor}
Let $\left(\mu,\mu_{2},\ldots,\mu_{n}\right)$ be the eigenvalues
of $A$, perhaps with repetitions. Then
\[
P=\frac{\prod\left(A-\mu_{i}I\right)}{\prod\left(\mu-\mu_{i}\right)}.
\]
\end{cor}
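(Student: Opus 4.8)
The plan is to mimic the proof of the Lemma: I would exhibit the matrix $\prod_{i=2}^{n}(A-\mu_{i}I)$ as a scalar multiple of $P$ by showing it lies in the one-dimensional space $V$ of matrices with column space $\span\left\{ v\right\} $ and row space $\span\left\{ u\right\} $, and then determine the scalar by evaluating on the Perron eigenvector $v$.

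First I would write $p_{A}(x)=\prod_{i=1}^{n}(x-\mu_{i})=(x-\mu)\,q(x)$, where $q(x)=\prod_{i=2}^{n}(x-\mu_{i})$; since $p_{A}$ has real coefficients and $\mu$ is real, $q$ has real coefficients, so $q(A)=\prod_{i=2}^{n}(A-\mu_{i}I)\in M_{n}(\mathbb{R})$, and — the factors all being polynomials in $A$ — they commute, so the order in the product is immaterial. By the Cayley--Hamilton theorem $p_{A}(A)=(A-\mu I)\,q(A)=q(A)\,(A-\mu I)=0$, hence every column of $q(A)$ lies in $\ker(\mu I-A)$, which by Perron--Frobenius equals $\span\left\{ v\right\} $, and likewise every row of $q(A)$ lies in $\span\left\{ u\right\} $. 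Thus $q(A)\in V$; since $\dim V=1$ and $0\neq P\in V$, there is $\beta\in\mathbb{R}$ with $q(A)=\beta P$.

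To pin down $\beta$ I would apply both sides to $v$: as $Av=\mu v$ we get $q(A)v=q(\mu)v$, while $\beta Pv=\beta v$ because $Pv=v$. Hence $\beta=q(\mu)=\prod_{i=2}^{n}(\mu-\mu_{i})$, which is nonzero since $\mu$ is a simple eigenvalue, giving
\[
P=\frac{\prod_{i=2}^{n}(A-\mu_{i}I)}{\prod_{i=2}^{n}(\mu-\mu_{i})}.
\]
Alternatively, the same $V$-argument shows $\adj(\mu I-A)=q(A)$ (both send $v$ to $p_{A}'(\mu)v$, by the computation in the Remark), and the statement then drops out of Corollary \ref{perron projection with derivative of determinant} together with $p_{A}'(\mu)=q(\mu)$.

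I do not expect a real obstacle here; the only points needing care are that the products run over the non-Perron eigenvalues $i=2,\dots,n$ (so the denominator is nonzero), that the complex linear factors recombine into a real matrix, and that $q(A)$ has both the correct row space and the correct column space rather than merely being annihilated on one side.
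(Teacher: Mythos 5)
Your argument is correct, but its main line is a genuinely different route from the paper's. The paper obtains the identity $\adj\left(\mu I-A\right)=\left(A-\mu_{2}I\right)\cdots\left(A-\mu_{n}I\right)$ from the explicit polynomial representation of $\adj\left(\lambda I-A\right)$ given in the preceding Remark together with Vieta's formulas, and then divides by $p_{A}'\left(\mu\right)=\left(\mu-\mu_{2}\right)\cdots\left(\mu-\mu_{n}\right)$ via Corollary \ref{perron projection with derivative of determinant}; your ``alternatively'' paragraph is essentially this proof. Your primary argument instead bypasses the adjugate altogether: writing $p_{A}\left(x\right)=\left(x-\mu\right)q\left(x\right)$, you use Cayley--Hamilton to place $q\left(A\right)$ in the one-dimensional space $V$ from the Lemma's proof (its columns lie in the right $\mu$-eigenspace and its rows in the left one, both spanned by $v$ and $u$ respectively since $\mu$ is simple), and you then fix the scalar by evaluating at $v$, getting $q\left(A\right)=q\left(\mu\right)P$ with $q\left(\mu\right)=\prod_{i\geq2}\left(\mu-\mu_{i}\right)\neq0$. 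Your version is more self-contained, needing only Cayley--Hamilton and simplicity of $\mu$ rather than the adjugate expansion and Vieta; the paper's version is shorter given the machinery it has already set up and additionally identifies $\adj\left(\mu I-A\right)$ itself with $\prod_{i\geq2}\left(A-\mu_{i}I\right)$, which is slightly more than the corollary asserts. The points of care you flag (product taken over $i\geq2$ only, reality of $q$, controlling both the row and the column space) are exactly the right ones, and none of them causes a gap.
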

\begin{proof}
Using the representation of $\adj\left(\mu I-A\right)$ as a polynomial
of degree $n-1$, it follows from Vieta's polynomial formulas (see
for example \cite{key-123}) that
\[
\adj\left(\mu I-A\right)=\left(A-\mu_{2}I\right)\cdots\left(A-\mu_{n}I\right).
\]
Since $p_{A}'\left(\mu\right)=\left(\mu-\mu_{2}\right)\cdots\left(\mu-\mu_{n}\right)$
this gives the desired result.
\end{proof}

\subsection{Comparison between the non-weighted case and the weighted case}

When considering path counting questions on a non-weighted graph $G$,
it is convenient to define its adjacency matrix. This is the square
matrix $A\in M_{n}\left(\mathbb{R}\right)$ indexed by the vertices
of $G$, where $A_{ij}$ is set as the number of edges from vertex
$i$ to vertex $j$. Note that an adjacency matrix of a strongly connected
graph is irreducible, but not necessarily primitive. For primitivity
of the adjacency matrix we must also assume that $G$ is aperiodic,
which means that the greatest common divisor of the set of lengths
of all closed paths is $1$.

Recall that the number of paths from vertex $i$ to vertex $j$ consisting
of exactly $k$ edges is exactly $\left(A^{k}\right)_{ij}$. It follows
that if the graph is strongly connected and aperiodic, then $A$ is
primitive, and by the Perron-Frobenius theorem this number can be
approximated by $P_{ij}\mu^{k}$, where $P$ is the Perron projection
of $A$ described above and $\mu$ is the Perron-Frobenius eigenvalue.

It is interesting to compare the matrices $P$ and $Q$, where $Q$
is the matrix defined in the statement of Theorem \ref{main result 1}.
Due to Jacobi's formula we can write
\[
Q=\frac{\adj\left(I-M\left(\lambda\right)\right)}{-\tr\left(\adj\left(I-M\left(\lambda\right)\right)\cdot M^{\prime}\left(\lambda\right)\right)}=\frac{\adj\left(I-M\left(\lambda\right)\right)}{\frac{d}{ds}\left(\det\left(I-M\left(s\right)\right)\right)\vert_{s=\lambda}}
\]
and
\[
P=\frac{\adj\left(I-\frac{1}{\mu}A\right)}{\tr\left(\adj\left(I-\frac{1}{\mu}A\right)\right)}=\frac{\adj\left(I-\frac{1}{\mu}A\right)}{\frac{d}{dx}\det\left(I-\frac{1}{x}\frac{A}{\mu}\right)\vert_{x=1}}
\]
and the resemblance is clear.

We remark that in the case of a non-weighted graph $G$ we assume
that $G$ is strongly connected and aperiodic in order to guarantee
convergence of $\frac{1}{\mu^{k}}A^{k}$ to $P$, otherwise the corresponding
adjacency matrix need not be primitive and the Perron-Frobenius theorem
may not be implied. In the case of weighted graphs we replace the
assumption that $G$ is aperiodic by the assumption of incommensurability.

As an example we look at the following case: Assume all edges in $G$
are of equal length $a>0$. So $M\left(s\right)=e^{-as}A$ where $A$
is the adjacency matrix of the underlying non-weighted graph. Obviously
$G$ is not incommensurable and the assumptions of Theorem \ref{main result 1}
do not hold, but still we can calculate $Q$.

Let $\mu$ be the Perron-Frobenius eigenvalue of $A$, then the matrix
$M\left(\frac{\log\mu}{a}\right)=\frac{1}{\mu}A$ has Perron-Frobenius
eigenvalue $1$, and so $\lambda=\frac{\log\mu}{a}$. Since
\[
\begin{array}{rl}
-\tr\left(\adj\left(I-M\left(\lambda\right)\right)\cdot M^{\prime}\left(\lambda\right)\right) & =-\tr\left(\adj\left(I-\frac{1}{\mu}A\right)\cdot\frac{-a}{\mu}A\right)\\
 & =a\tr\left(\adj\left(I-\frac{1}{\mu}A\right)\cdot\frac{1}{\mu}A\right)\\
 & =a\tr\left(\adj\left(I-\frac{1}{\mu}A\right)\right)
\end{array}
\]
we get
\[
Q=\frac{\adj\left(I-M\left(\lambda\right)\right)}{-\tr\left(\adj\left(I-M\right)\cdot M^{\prime}\left(\lambda\right)\right)}=\frac{\adj\left(I-\frac{1}{\mu}A\right)}{a\tr\left(\adj\left(I-\frac{1}{\mu}A\right)\right)}=\frac{1}{a}P
\]
and so if we think of a non-weighted graph as a weighted graph with
edges all of length $a=1$, we get $P=Q$.

\section{The Wiener-Ikehara Theorem and the Laplace Transform}

\subsection{The Wiener-Ikehara Theorem}

The proofs of our main results follow from this Tauberian theorem
due to Wiener and Ikehara (see chapter 8.3 in \cite{key-5}).
\begin{thm*}
Let $f(x)$ be a non-negative and monotone function on $[0,\infty)$.
Suppose that the Laplace transform of $f\left(x\right)$, given by
\[
F(s):=\mathcal{L}\left\{ f\left(x\right)\right\} \left(s\right)=\int_{0}^{\infty}f\left(x\right)e^{-xs}dx,
\]
converges for all $s$ with $\mbox{Re}(s)>\lambda$, and that there
exists $c\in\mathbb{R}$ for which the function
\[
F(s)-\frac{c}{s-\lambda}
\]
extends to a continuous function in the closed half-plane $\mbox{Re}(s)\geq\lambda$.
Then
\[
f\left(x\right)=ce^{\lambda x}+o\left(e^{\lambda x}\right),\quad x\rightarrow\infty.
\]
\end{thm*}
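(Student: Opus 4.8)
The plan is to prove the theorem by the classical Fourier-analytic route, with the monotonicity of $f$ furnishing the Tauberian input. Assume throughout that $f$ is nondecreasing (the nonincreasing case, which is the one relevant when $\lambda<0$, is symmetric), and set $\phi(x):=e^{-\lambda x}f(x)$, so that the assertion to be proved becomes $\phi(x)\to c$ as $x\to\infty$. First I would note that for every $\varepsilon>0$, convergence of $F$ on the line $\mathrm{Re}(s)=\lambda+\varepsilon$ together with $\frac{c}{\varepsilon+it}=c\int_0^\infty e^{-(\varepsilon+it)x}\,dx$ gives
\[
g(\lambda+\varepsilon+it):=F(\lambda+\varepsilon+it)-\frac{c}{\varepsilon+it}=\int_0^\infty\bigl(\phi(x)-c\bigr)e^{-\varepsilon x}e^{-itx}\,dx,
\]
so that $g$, restricted to this vertical line, is the Fourier transform of the function $\psi_\varepsilon(x)=(\phi(x)-c)e^{-\varepsilon x}\mathbf{1}_{[0,\infty)}(x)$, which lies in $L^1(\mathbb{R})$ because $F$ converges at $\lambda+\varepsilon$. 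By hypothesis $g$ extends continuously to the closed half-plane $\mathrm{Re}(s)\ge\lambda$.

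The second ingredient is a Fej\'er-type mollifier. For $v>0$ let $K_v(u)=\frac{v}{\pi}\bigl(\frac{\sin vu}{vu}\bigr)^2$, which is nonnegative, has total integral $1$, satisfies $K_v(u)\le\frac{1}{\pi v u^2}$, and is the inverse Fourier transform of the triangular bump $\Lambda_v(t)=\bigl(1-\tfrac{|t|}{2v}\bigr)\mathbf{1}_{[-2v,2v]}(t)$. Multiplying the displayed identity by $\Lambda_v(t)e^{ity}$ and integrating over $t$ (legitimate by Fubini for a fixed $\varepsilon>0$, as $\Lambda_v$ is bounded with compact support) yields
\[
\int_{-2v}^{2v}g(\lambda+\varepsilon+it)\,\Lambda_v(t)\,e^{ity}\,dt=2\pi\int_0^\infty\bigl(\phi(x)-c\bigr)e^{-\varepsilon x}K_v(y-x)\,dx .
\]
One then lets $\varepsilon\downarrow0$. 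On the left, $g(\lambda+\varepsilon+it)\to g(\lambda+it)$ uniformly for $|t|\le2v$ by uniform continuity of the continuous extension on a compact set, so the left side tends to $H_v(y):=\int_{-2v}^{2v}g(\lambda+it)\Lambda_v(t)e^{ity}\,dt$. On the right, because $f\ge0$ and $K_v\ge0$, the integrand $f(x)e^{-(\lambda+\varepsilon)x}K_v(y-x)$ increases pointwise to $f(x)e^{-\lambda x}K_v(y-x)$ as $\varepsilon\downarrow0$; hence the monotone convergence theorem applies and, in particular, shows that $\int_0^\infty\phi(x)K_v(y-x)\,dx$ is finite and satisfies
\[
2\pi\int_0^\infty\phi(x)K_v(y-x)\,dx=H_v(y)+2\pi c\int_{-\infty}^{y}K_v(u)\,du .
\]
Finally, letting $y\to\infty$ with $v$ fixed: $H_v(y)\to0$ by the Riemann--Lebesgue lemma, its $t$-integrand being continuous and compactly supported, while $\int_{-\infty}^{y}K_v\to1$; after the substitution $u=y-x$ this gives, for every fixed $v>0$, that $\int_{-\infty}^{y}\phi(y-u)K_v(u)\,du\to c$ as $y\to\infty$.

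It remains to de-smooth this averaged convergence, and here is where I expect the real work. First I would extract an \emph{a priori} bound: fixing $v$, we have $\int_{-\infty}^{y}\phi(y-u)K_v(u)\,du\le c+1$ for all large $y$, and since $K_v$ is bounded below by a positive constant on a fixed neighbourhood of $0$, monotonicity of $f$ (which on that neighbourhood lets us replace $f(y-u)$ by a value independent of $u$) forces $\phi(z)\le B$ for all large $z$; since $\phi$ is also locally bounded, it is bounded on $[0,\infty)$. Then, for small $\delta>0$, I would split $\int_{-\infty}^{y}\phi(y-u)K_v(u)\,du$ at $|u|=\delta$. On the window $|u|\le\delta$, monotonicity of $f$ gives $e^{-2\lambda\delta}\phi(y-\delta)\le\phi(y-u)\le e^{2\lambda\delta}\phi(y+\delta)$, and since $1-\tfrac{C}{v\delta}\le\int_{-\delta}^{\delta}K_v\le1$ this portion lies between $e^{-2\lambda\delta}\bigl(1-\tfrac{C}{v\delta}\bigr)\phi(y-\delta)$ and $e^{2\lambda\delta}\phi(y+\delta)$; the complementary portion is $O\bigl(1/(v\delta)\bigr)$ by $K_v(u)\le\frac{1}{\pi v u^2}$ and boundedness of $\phi$. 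Sending $y\to\infty$, then $v\to\infty$, then $\delta\to0$ squeezes $\limsup_{z\to\infty}\phi(z)\le c\le\liminf_{z\to\infty}\phi(z)$, which is exactly $f(x)=ce^{\lambda x}+o(e^{\lambda x})$. The main obstacle is this final de-smoothing: the tail bound requires the quadratic decay of the Fej\'er kernel in tandem with the a priori bound, and securing that bound is itself a genuine use of the positivity of $f$ and of the kernel together with monotonicity --- as was, a step earlier, the monotone-convergence argument that alone made the $\varepsilon\downarrow0$ passage on the right-hand side legitimate and retroactively yielded the integrability of $\int_0^\infty\phi(x)K_v(y-x)\,dx$. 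With these two inputs in place, everything else is routine estimation.
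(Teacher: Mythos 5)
Your proposal is correct, but note that the paper does not prove this statement at all: it is the Wiener--Ikehara theorem, quoted verbatim with a citation to Montgomery--Vaughan, and your argument is essentially the classical Fej\'er-kernel proof given in that reference (the identity for $g(\lambda+\varepsilon+it)$, Fubini against the triangular multiplier, monotone convergence in $\varepsilon$, Riemann--Lebesgue in $y$, then the Tauberian de-smoothing via monotonicity and the quadratic decay of $K_v$). One cosmetic correction: since in this paper $\lambda$ may be negative or zero, the window estimates should read $e^{\pm 2|\lambda|\delta}$ rather than $e^{\pm 2\lambda\delta}$ (and, as you say, the nonincreasing case only swaps the roles of $\phi(y-\delta)$ and $\phi(y+\delta)$), which does not affect the squeeze as $\delta\to 0$.
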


\subsection{\label{subsec:The-Laplace-Transform}The Laplace Transform of the
counting and probability functions}

Denote by $\Gamma\left(i,j\right)$ the set of paths originating at
vertex $i\in\mathcal{V}$ and terminating at vertex $j\in\mathcal{V}$,
and by $p\left(\gamma\right)$ the product of probabilities of the
edges which define the path $\gamma$.

Let $A_{i,j}\left(x\right)$ denote the number of paths originating
at vertex $i$ and terminating at vertex $j$ of length at most $x$.
Then
\[
A_{i,j}\left(x\right)=\sum_{\gamma\in\Gamma\left(i,j\right)}\chi_{[l\left(\gamma\right),\infty)}\left(x\right)=\sum_{k=0}^{\infty}\sum_{\begin{subarray}{c}
\gamma\in\Gamma\left(i,j\right)\\
\text{with\,\ensuremath{k}\,edges}
\end{subarray}}\chi_{[l\left(\gamma\right),\infty)}\left(x\right)
\]
where $\chi_{A}$ is the characteristic function of the set $A\subset\mathbb{R}$.
The Laplace transform is
\[
\mathcal{L}\left\{ A_{i,j}\left(x\right)\right\} \left(s\right)=\sum_{k=0}^{\infty}\sum_{\begin{subarray}{c}
\gamma\in\Gamma\left(i,j\right)\\
\text{with\,\ensuremath{k}\,edges}
\end{subarray}}\frac{1}{s}e^{-l\left(\gamma\right)s}=\frac{1}{s}\left(\sum_{k=0}^{\infty}M^{k}\left(s\right)\right)_{i,j}.
\]

Let $\alpha$ be an edge originating at vertex $j$. Denote by $B_{i,\alpha}\left(x\right)$
the number of paths of length exactly $x$ from vertex $i$ to a point
on the edge $\alpha$. Then
\[
B_{i,\alpha}\left(x\right)=\sum_{\gamma\in\Gamma\left(i,j\right)}\chi_{[l\left(\gamma\right),l\left(\gamma\right)+l\left(\alpha\right))}\left(x\right)=\sum_{k=0}^{\infty}\sum_{\begin{subarray}{c}
\gamma\in\Gamma\left(i,j\right)\\
with\,k\,edges
\end{subarray}}\chi_{[l\left(\gamma\right),l\left(\gamma\right)+l\left(\alpha\right))}\left(x\right).
\]
The Laplace transform is
\[
\mathcal{L}\left\{ B_{i,\alpha}\left(x\right)\right\} \left(s\right)=\sum_{k=0}^{\infty}\sum_{\begin{subarray}{c}
\gamma\in\Gamma\left(i,j\right)\\
\text{with\,\ensuremath{k}\,edges}
\end{subarray}}\frac{1-e^{-l\left(\alpha\right)s}}{s}e^{-l\left(\gamma\right)s}=\frac{1-e^{-l\left(\alpha\right)s}}{s}\left(\sum_{k=0}^{\infty}M^{k}\left(s\right)\right)_{i,j}.
\]


Denote by $D_{i,\alpha}\left(T\right)$ the probability that a walker
leaving $i$ at time zero and moving along the graph at speed $1$,
would at time $T$ be on the edge $\alpha$ which originates at vertex
$j$. Then
\[
D_{i,\alpha}\left(T\right)=\sum_{\gamma\in\Gamma\left(i,j\right)}p\left(\gamma\right)p_{ij}\chi_{[l\left(\gamma\right),l\left(\gamma\right)+l\left(\alpha\right))}\left(T\right)=p_{ij}\sum_{k=0}^{\infty}\sum_{\begin{subarray}{c}
\gamma\in\Gamma\left(i,j\right)\\
\text{with\,\ensuremath{k}\,edges}
\end{subarray}}p\left(\gamma\right)\chi_{[l\left(\gamma\right),l\left(\gamma\right)+l\left(\alpha\right))}\left(T\right).
\]
The Laplace transform is
\[
\mathcal{L}\left\{ D_{i,\alpha}\left(T\right)\right\} \left(s\right)=\sum_{k=0}^{\infty}\sum_{\begin{subarray}{c}
\gamma\in\Gamma\left(i,j\right)\\
\text{with\,\ensuremath{k}\,edges}
\end{subarray}}p_{j\alpha}\frac{1-e^{-l\left(\alpha\right)s}}{s}p\left(\gamma\right)e^{-l\left(\gamma\right)s}=p_{j\alpha}\frac{1-e^{-l\left(\alpha\right)s}}{s}\left(\sum_{k=0}^{\infty}N^{k}\left(s\right)\right)_{i,j}.
\]

It will be shown that the sums $\sum_{k=0}^{\infty}M^{k}\left(s\right)$
and $\sum_{k=0}^{\infty}N^{k}\left(s\right)$ converge absolutely
for suitable values of $s$, and so we can change the order of summation
and integration as implied in the calculations above.

We will show that the constant $\lambda$ as described in the statement
of the Theorems \ref{main result 1} and \ref{Main result 2} exists,
and that these Laplace transforms satisfy the conditions of the Wiener-Ikehara
theorem with a simple pole at $s=\lambda$.

\section{Proof of main results }

Although some of the following results can be found in the literature,
we include the full details for the sake of clarity.
\begin{lem}
\label{lem: inequality complex matrix dominated by real}The matrix
elements of powers of $M\left(s\right)$ for $s=\sigma+\mathbf{i}t$
(respectively $N\left(s\right)$) are bounded in absolute value by
the corresponding matrix elements of powers of $M\left(\sigma\right)$
(respectively $N\left(\sigma\right)$).
\end{lem}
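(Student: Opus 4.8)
The plan is to prove this by induction on the power $k$, using the triangle inequality together with the simple observation that the entry-wise absolute value of $M(s)$ is dominated, entry by entry, by $M(\sigma)$. For the base case $k=1$: by definition $M_{ij}(s) = \sum_{r} e^{-s\, l(\alpha_r)}$ where the sum runs over the edges $\alpha_1,\dots,\alpha_k$ from $i$ to $j$, and each summand has absolute value $|e^{-s\, l(\alpha_r)}| = e^{-\sigma\, l(\alpha_r)}$ since $l(\alpha_r) \in \mathbb{R}$. Hence by the triangle inequality $|M_{ij}(s)| \le \sum_r e^{-\sigma\, l(\alpha_r)} = M_{ij}(\sigma)$, the last equality because $M(\sigma)$ is real with non-negative entries. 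The same computation applies verbatim to $N$, with the extra factors $p_{i\alpha_r} > 0$ passing through the triangle inequality unchanged, so $|N_{ij}(s)| \le N_{ij}(\sigma)$.

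For the inductive step, suppose $|(M(s)^{k})_{ij}| \le (M(\sigma)^{k})_{ij}$ for all $i,j$. Then writing out the matrix product and applying the triangle inequality,
\[
\bigl|(M(s)^{k+1})_{ij}\bigr| = \Bigl| \sum_{m} (M(s)^{k})_{im}\, M(s)_{mj} \Bigr| \le \sum_{m} \bigl|(M(s)^{k})_{im}\bigr| \cdot \bigl|M(s)_{mj}\bigr| \le \sum_{m} (M(\sigma)^{k})_{im}\, M(\sigma)_{mj} = (M(\sigma)^{k+1})_{ij},
\]
where the middle inequality uses the inductive hypothesis on $M(s)^k$ together with the base case for $M(s)$, and both the non-negativity of the entries of $M(\sigma)$ and $M(\sigma)^k$ is what makes the product of the two bounds a legitimate upper bound. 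The argument for $N$ is identical. This completes the induction and the proof.

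There is essentially no obstacle here: the only point requiring a moment's care is that one must know $M(\sigma)$ and its powers have non-negative entries before multiplying inequalities, but this is immediate from the definition of $M$ (a sum of real exponentials) and the fact that products and sums of non-negative numbers are non-negative. I would keep the written proof to a few lines, stating the base case and the one-line inductive step.
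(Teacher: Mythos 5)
Your proof is correct and is essentially the same argument as the paper's: both rest on the entry-wise bound $|M_{ij}(s)| \le M_{ij}(\sigma)$, the triangle inequality, and the non-negativity of $M(\sigma)$, the only cosmetic difference being that you package the computation as an induction on $k$ while the paper expands $\left(M^{k}(s)\right)_{ij}$ directly as a sum over index sequences $i_{1},\ldots,i_{k-1}$ and bounds it in one step.
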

\begin{proof}
Indeed for every $k\in\mathbb{N}$
\begin{eqnarray*}
\left|\left(M^{k}\left(s\right)\right)_{ij}\right| & = & \left|\sum_{i_{1},..,i_{k-1}}M_{i,i_{1}}\left(s\right)\cdots M_{i_{k-1},j}\left(s\right)\right|\\
 & \leq & \sum_{i_{1},..,i_{k-1}}\left|M_{i,i_{1}}\left(s\right)\right|\cdots\left|M_{i_{k-1},j}\left(s\right)\right|\\
 & \leq & \sum_{i_{1},..,i_{k-1}}M_{i,i_{1}}\left(\sigma\right)\cdots M_{i_{k-1},j}\left(\sigma\right)=\left(M^{k}\left(\sigma\right)\right)_{ij}
\end{eqnarray*}
and similarly for $N$, as required.
\end{proof}
\begin{rem}
This lemma is contained in a result due to Wielandt which can be found
in \cite{key-3-1}.
\end{rem}
For $\sigma\in\mathbb{R}$ the matrices $M\left(\sigma\right)$ and
$N\left(\sigma\right)$ are real, non-negative and irreducible (because
the graph $G$ is strongly connected), and so by Perron-Frobenius
there exists a dominant real eigenvalue $\mu\left(\sigma\right)$
of multiplicity $1$ corresponding to a positive eigenvector $v\left(\sigma\right)$.
\begin{lem}
\label{lem:lambda for with PF eigenval is 1}Let $M\left(\sigma\right)$
be as above. Then there exists $\lambda>0$ such that $\mu\left(\lambda\right)=1$
and for every $\sigma>\lambda$ the corresponding dominant eigenvalue
satisfies $\mu\left(\sigma\right)<1$.
\end{lem}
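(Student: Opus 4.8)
The plan is to study the function $\sigma\mapsto\mu(\sigma)$ on all of $\mathbb{R}$ and show that it is continuous, strictly decreasing, tends to $+\infty$ as $\sigma\to-\infty$, tends to $0$ as $\sigma\to+\infty$, and satisfies $\mu(0)>1$; the lemma then follows at once from the intermediate value theorem, the assertion ``$\sigma>\lambda\Rightarrow\mu(\sigma)<1$'' being exactly $\mu(\sigma)<\mu(\lambda)=1$ by strict monotonicity. For continuity, recall that by Perron--Frobenius $\mu(\sigma)$ is the spectral radius of the non-negative irreducible matrix $M(\sigma)$. Since each entry $M_{ij}(\sigma)=\sum_{\alpha\colon i\to j}e^{-\sigma l(\alpha)}$ depends real-analytically on $\sigma$ and the spectral radius is a continuous function of the matrix entries (it is the largest modulus of a root of the characteristic polynomial, whose coefficients vary continuously), the map $\sigma\mapsto\mu(\sigma)$ is continuous; alternatively, $\mu(\sigma)$ is a simple root of $\det(xI-M(\sigma))$, and one may invoke the implicit function theorem.

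For strict monotonicity I will use the standard comparison principle: if $0\le B\le C$ entrywise, both $B$ and $C$ are irreducible, and $B\ne C$, then $\rho(B)<\rho(C)$. To see this, let $u>0$ be a left Perron eigenvector of $B$ and $v>0$ a right Perron eigenvector of $C$; then $\rho(B)\,u^{T}v=u^{T}Bv\le u^{T}Cv=\rho(C)\,u^{T}v$, with strict inequality since $u^{T}(C-B)v=\sum_{ij}u_{i}(C-B)_{ij}v_{j}>0$, and $u^{T}v>0$. Now fix $\sigma_{1}<\sigma_{2}$. The zero pattern of $M(\sigma)$ coincides with the edge pattern of the strongly connected graph $G$ and is independent of $\sigma$, so $M(\sigma_{1})$ and $M(\sigma_{2})$ are both irreducible; since every edge has positive length and $G$ has at least one edge (incommensurability forces at least two closed paths, hence an edge), $M(\sigma_{2})\le M(\sigma_{1})$ entrywise with $M(\sigma_{2})\ne M(\sigma_{1})$, whence $\mu(\sigma_{2})<\mu(\sigma_{1})$. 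For the limits: as $\sigma\to+\infty$ all entries of $M(\sigma)$ tend to $0$, so $\mu(\sigma)\le\|M(\sigma)\|_{\infty}\to0$; and since $G$ contains a closed path through some vertex $i$ with $k\ge1$ edges and total length $L>0$, we have $(M(\sigma)^{k})_{ii}\ge e^{-\sigma L}$, so, using that the spectral radius of a non-negative matrix is at least any of its diagonal entries, $\mu(\sigma)^{k}=\rho(M(\sigma)^{k})\ge e^{-\sigma L}\to\infty$ as $\sigma\to-\infty$.

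It remains to prove $\mu(0)>1$, which is where incommensurability enters. Here $M(0)=A$ is the adjacency matrix of $G$, a non-negative irreducible integer matrix with $\rho(A)\ge1$ because $G$ has a closed path. Suppose $\rho(A)=1$, and let $v>0$ be a right Perron eigenvector normalized so that $\min_{j}v_{j}=1$. For any vertex $j$ with $v_{j}=1$ we get $1=v_{j}=\sum_{k}A_{jk}v_{k}\ge\sum_{k}A_{jk}=\mathrm{outdeg}(j)\ge1$, which forces $\mathrm{outdeg}(j)=1$ and, from the equality $\sum_{k}A_{jk}v_{k}=\sum_{k}A_{jk}$, that $v_{k}=1$ for the unique out-neighbour $k$ of $j$. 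Thus the set $\{j\colon v_{j}=1\}$ is closed under passing to out-neighbours and, $G$ being strongly connected, equals all of $\mathcal{V}$; hence $A$ has exactly one nonzero entry (equal to $1$) in each row, and being irreducible it is the permutation matrix of a single $n$-cycle, i.e.\ $G$ is a single directed cycle. But then every closed orbit of $G$ is an integer number of traversals of this cycle, so all closed-orbit lengths are rational multiples of one another, contradicting incommensurability. Therefore $\mu(0)=\rho(A)>1$, so by the intermediate value theorem and strict monotonicity there is a (unique) $\lambda$ with $\mu(\lambda)=1$; since $\mu(0)>1=\mu(\lambda)$ we conclude $\lambda>0$, and $\mu(\sigma)<1$ for every $\sigma>\lambda$.

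The main obstacle is the argument of the last paragraph: every other step is a mechanical combination of Perron--Frobenius monotonicity with elementary real analysis, whereas ruling out $\rho(A)=1$ requires the combinatorial observation that a strongly connected graph whose adjacency matrix has spectral radius $1$ must be a single directed cycle, together with the remark that such a graph has a uniformly discrete (indeed $\mathbb{Q}$-commensurable) closed-orbit length spectrum and is therefore excluded by hypothesis.
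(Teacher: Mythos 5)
Your proof is correct, and it differs from the paper's in two substantive ways. For strict monotonicity of $\sigma\mapsto\mu(\sigma)$ the paper invokes differentiability of the Perron--Frobenius eigenvalue together with the formula $\mu'(\sigma)=u^{T}M'(\sigma)v/u^{T}v$ (Horn--Johnson, Theorem 6.3.12), whereas you use the elementary entrywise comparison principle $0\le B\le C$, $B\ne C$, both irreducible $\Rightarrow\rho(B)<\rho(C)$, proved by pairing a left Perron vector of $B$ with a right Perron vector of $C$; this avoids any smoothness input (note, though, that the paper's derivative formula is reused later to show that $\det\left(I-M(s)\right)$ has a \emph{simple} zero at $\lambda$, so it is not redundant in the paper's overall scheme). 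More importantly, you supply a proof of $\mu(0)>1$, which the paper simply asserts: for a general strongly connected graph one only has $\mu(0)\ge1$, with equality exactly when the graph is a single directed cycle, and your argument (normalizing the Perron vector, showing the set where it attains its minimum is closed under out-neighbours, concluding that $M(0)$ would be a cyclic permutation matrix, and then observing that a single cycle has all closed-orbit lengths commensurable) is precisely the point where the incommensurability hypothesis is needed to rule this out. So your write-up is not only a valid alternative but fills in a step the paper leaves implicit; the rest (continuity, the limits at $\pm\infty$, and the intermediate value theorem) matches the paper's skeleton.
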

\begin{proof}
For all $\sigma\in\mathbb{R}$ there exists $\mu\left(\sigma\right)$
which by Perron-Frobenius is a simple eigenvalue of $M\left(\sigma\right)$.
Let $v\left(\sigma\right)$ and $u\left(\sigma\right)$ be right and
left positive eigenvectors, then since $M\left(\sigma\right)$ is
differentiable, then by Theorem 6.3.12 in \cite{key-3-2} $\mu\left(\sigma\right)$
is differentiable and the following formula holds
\[
\frac{d}{d\sigma}\mu\left(\sigma\right)=\frac{u^{T}\left(\sigma\right)M^{\prime}\left(\sigma\right)v\left(\sigma\right)}{u^{T}\left(\sigma\right)v\left(\sigma\right)}.
\]
Since the eigenvectors are positive, and the entry-wise derivative
of $M$ is non-positive, we deduce that
\begin{equation}
\frac{d}{d\sigma}\mu\left(\sigma\right)<0\label{eq:derivative of PF eigenval is negative}
\end{equation}
and in particular $\mu$ is monotone decreasing. Recall that $\mu\left(0\right)$
is the largest eigenvalue of the adjacency matrix $M\left(0\right)$
of the strongly connected and incommensurable graph $G$ and so\textbf{
$\mu\left(0\right)>1$.} Moreover, since all elements of $M\left(\sigma\right)$
tend to zero as $\sigma$ tends to infinity, so does the Perron-Frobenius
eigenvalue. Therefore there exists a finite $\lambda>0$ for which
$\mu\left(\lambda\right)=1$ and $\mu\left(\sigma\right)<1$ for all
$\sigma>\lambda$.
\end{proof}
\begin{lem}
\label{probability dominant eigenvalue}Let $N\left(\sigma\right)$
be as above. Then there exists $\lambda\leq0$ such that $\mu\left(\lambda\right)=1$
and for every $\sigma>\lambda$ the corresponding dominant eigenvalue
satisfies $\mu\left(\sigma\right)<1$.
\end{lem}
\begin{proof}
The proof is similar to the discussion about $M\left(\sigma\right)$,
only here we must show that $\mu\left(0\right)\leq1$ to verify that
the value of $\lambda$ for which $\mu\left(\lambda\right)=1$ is
negative. This follows from our assumption that the sum of the probabilities
of edges originating at a given vertex is less or equal to $1$, and
so the sum of the entries of any row in $N\left(0\right)$ is bounded
by $1$, therefore $\mu\left(0\right)$ is bounded by $1$ (see Wielandt's\textbf{
}proof of Perron-Frobenius theorem which appears in \cite{key-3-1}).
\end{proof}
\begin{rem}
\label{gamma is zero remark}Clearly $\mu\left(0\right)=1$ if and
only if the sum of all probabilities for edges originating at vertex
$i$ is~$1$, for all $i$. In other words $\lambda=0$ if and only
if $N\left(0\right)$ is a right stochastic matrix, that is all its
rows sum up to $1$.
\end{rem}
The following Lemmas are stated for $M$, but analogous statements
and their proofs apply for $N$.
\begin{lem}
\label{generating function convergence and formula}Let $\lambda\in\mathbb{R}$
be as in Lemma \ref{lem:lambda for with PF eigenval is 1}. Then $\sum_{k=0}^{\infty}M^{k}\left(\sigma\right)$
converges for all $s=\sigma+\mathbf{i}t$ with $\sigma>\lambda$,
and in this case
\[
\sum_{k=0}^{\infty}M^{k}\left(s\right)=\left(I-M\left(s\right)\right)^{-1}=\frac{\adj\left(I-M\left(s\right)\right)}{\det\left(I-M\left(s\right)\right)}
\]
and so the Laplace transforms of the counting functions described
above are analytic in the half plane $\sigma>\lambda$.
\end{lem}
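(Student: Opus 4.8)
The plan is to show that the minimal $\lambda$ is exactly the number furnished by Lemma~\ref{lem:lambda for with PF eigenval is 1}, namely the unique real value at which the Perron--Frobenius eigenvalue of $M$ equals $1$, and then to read the closed form off the Neumann series. First I would fix $\sigma>\lambda$. By Lemma~\ref{lem:lambda for with PF eigenval is 1} we have $\mu(\sigma)<1$, and since $M(\sigma)$ is non-negative, Perron--Frobenius identifies $\mu(\sigma)$ with the spectral radius $\rho\big(M(\sigma)\big)$; by Gelfand's formula $\|M^k(\sigma)\|^{1/k}\to\rho\big(M(\sigma)\big)<1$, so $\|M^k(\sigma)\|$ decays geometrically and the Neumann series $\sum_{k=0}^\infty M^k(\sigma)$ converges entrywise. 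For an arbitrary $s=\sigma+\mathbf{i}t$ with $\mathrm{Re}(s)>\lambda$, Lemma~\ref{lem: inequality complex matrix dominated by real} gives $|(M^k(s))_{ij}|\le(M^k(\sigma))_{ij}$ for every $k$, hence
\[
\sum_{k=0}^{\infty}\big|(M^k(s))_{ij}\big|\le\sum_{k=0}^{\infty}(M^k(\sigma))_{ij}<\infty,
\]
so $\sum_{k=0}^\infty M^k(s)$ converges absolutely throughout $\mathrm{Re}(s)>\lambda$, with convergence governed entirely by $\mathrm{Re}(s)$.

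Next I would extract the formula. From the telescoping identity $(I-M(s))\sum_{k=0}^{N}M^k(s)=I-M^{N+1}(s)$, and since $M^{N+1}(s)\to0$ as $N\to\infty$ (its entries are tails of convergent series), letting $N\to\infty$ gives $(I-M(s))\sum_{k=0}^\infty M^k(s)=I$; the symmetric computation on the other side shows $\sum_{k=0}^\infty M^k(s)=(I-M(s))^{-1}$. In particular $\det(I-M(s))\ne0$ on $\mathrm{Re}(s)>\lambda$, and applying the cofactor identity $A\,\adj(A)=\det(A)I$ to $A=I-M(s)$ yields $(I-M(s))^{-1}=\adj(I-M(s))/\det(I-M(s))$. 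Each entry $M_{ij}(s)$ is a finite sum of exponentials, hence entire, so $\adj(I-M(s))$ and $\det(I-M(s))$ are entire in $s$; as the denominator does not vanish on $\mathrm{Re}(s)>\lambda$, the quotient is analytic there, and multiplying by the prefactors $\tfrac1s$ (analytic away from $s=0$) and $\tfrac{1-e^{-l(\alpha)s}}{s}$ (entire) shows that the Laplace transforms of $A_{i,j}$ and $B_{i,\alpha}$ --- and, with $N$ in place of $M$, of $C_{i,j}$ and $D_{i,\alpha}$ --- are analytic in the half-plane $\mathrm{Re}(s)>\lambda$; since $\lambda>0$ in the $M$-case, the point $s=0$ lies outside this half-plane and the factor $\tfrac1s$ causes no trouble.

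The remaining point, and the one I expect to require the most care, is minimality: I must rule out convergence on any larger half-plane. For real $\sigma\le\lambda$, the monotonicity of $\mu$ established in the proof of Lemma~\ref{lem:lambda for with PF eigenval is 1} gives $\mu(\sigma)\ge\mu(\lambda)=1$. Letting $v(\sigma)>0$ be the Perron eigenvector with $M(\sigma)v(\sigma)=\mu(\sigma)v(\sigma)$, we have $\sum_{k=0}^{N}M^k(\sigma)v(\sigma)=\big(\sum_{k=0}^{N}\mu(\sigma)^k\big)v(\sigma)$, whose entries tend to $+\infty$ because $\mu(\sigma)\ge1$; hence $\sum_{k=0}^\infty M^k(\sigma)$ diverges. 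Consequently, for any $\lambda'<\lambda$ one may pick a real $\sigma\in(\lambda',\lambda]$ lying in the half-plane $\mathrm{Re}(s)>\lambda'$ at which the series diverges, so $\lambda$ is the minimal admissible constant. Everything beyond this is the standard Neumann-series and Perron--Frobenius toolkit, and the same argument applies verbatim with $N$ in place of $M$ (where now $\lambda\le0$).
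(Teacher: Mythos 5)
Your proof is correct and follows essentially the same route as the paper: convergence of the real series $\sum_{k}M^{k}(\sigma)$ for $\sigma>\lambda$ via $\mu(\sigma)<1$ from Lemma \ref{lem:lambda for with PF eigenval is 1}, then the domination Lemma \ref{lem: inequality complex matrix dominated by real} for complex $s$, with the Neumann-series identity giving $(I-M(s))^{-1}=\adj(I-M(s))/\det(I-M(s))$. The extra details you supply (Gelfand's formula, the telescoping argument, minimality via the positive Perron eigenvector, and the analyticity of the prefactors) are all sound and simply make explicit what the paper leaves implicit.
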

\begin{proof}
The Lemma follows because for $\sigma>\lambda$, as in Lemma \ref{lem:lambda for with PF eigenval is 1},
the geometric sum $\sum_{k=0}^{\infty}M^{k}\left(\sigma\right)$ converges,
and by Lemma \ref{lem: inequality complex matrix dominated by real}
so does $\sum_{k=0}^{\infty}M^{k}\left(s\right)$ .
\end{proof}
Plugging the previous statement in the expressions derived in the
previous section for the Laplace transforms of the counting functions
we study, we conclude the following corollary.
\begin{cor}
\label{Laplace transforms}Let $A_{i,j}\left(x\right),\,B_{i,\alpha}\left(x\right)$
and $D_{i,\alpha}\left(T\right)$ be the counting functions defined
in subsection \ref{subsec:The-Laplace-Transform}. The associated
Laplace transforms are given by
\begin{eqnarray*}
\mathcal{L}\left\{ A_{i,j}\left(x\right)\right\} \left(s\right) & = & \frac{1}{s}\cdot\frac{\left({\rm adj}\left(I-M\left(s\right)\right)\right)_{ij}}{\det\left(I-M\left(s\right)\right)},\\
\mathcal{L}\left\{ B_{i,\alpha}\left(x\right)\right\} \left(s\right) & = & \frac{1-e^{-l\left(\alpha\right)s}}{s}\cdot\frac{\left({\rm adj}\left(I-M\left(s\right)\right)\right)_{ij}}{\det\left(I-M\left(s\right)\right)},\\
\mathcal{L}\left\{ D_{i,\alpha}\left(T\right)\right\} \left(s\right) & = & p_{j\alpha}\frac{1-e^{-l\left(\alpha\right)s}}{s}\cdot\frac{\left({\rm adj}\left(I-N\left(s\right)\right)\right)_{ij}}{\det\left(I-N\left(s\right)\right)},
\end{eqnarray*}
where $\frac{1-e^{-l\left(\alpha\right)s}}{s}$ is an entire function
with value $l\left(\alpha\right)$ at $s=0$.
\end{cor}
\begin{lem}
The matrix $\mbox{\ensuremath{\adj\left(I-M\left(\lambda\right)\right)}}$
has positive entries.
\end{lem}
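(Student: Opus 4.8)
The plan is to combine the adjugate representation of the Perron projection with the fact that this projection is strictly positive, reducing the statement to checking the sign of the derivative of a characteristic polynomial.

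First I would apply Corollary \ref{perron projection with derivative of determinant} to the matrix $A = M(\lambda)$. By Lemma \ref{lem:lambda for with PF eigenval is 1} the matrix $M(\lambda)$ is non-negative and irreducible with Perron--Frobenius eigenvalue $\mu(\lambda) = 1$, so the corollary gives
\[
P = \frac{\adj(I - M(\lambda))}{p'(1)}, \qquad p(x) := \det(xI - M(\lambda)),
\]
where $P = vu^{T}/(u^{T}v)$ is the Perron projection and $v,u$ are the positive right and left Perron eigenvectors of $M(\lambda)$. In particular $P$ has strictly positive entries and $\adj(I - M(\lambda)) = p'(1)\,P$, so it suffices to prove $p'(1) > 0$.

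Next I would verify $p'(1) > 0$ by an elementary sign argument. The characteristic polynomial $p$ is monic of degree $n$, hence $p(x) \to +\infty$ as $x \to +\infty$; and by Perron--Frobenius every eigenvalue $\mu_{i}$ of $M(\lambda)$ has $|\mu_{i}| \le \mu(\lambda) = 1$, so $p$ has no real zero in $(1,\infty)$ and therefore $p(x) > 0$ for every real $x > 1$. Since $1$ is a simple zero of $p$ (again Perron--Frobenius), write $p(x) = (x-1)q(x)$ with $q(1) \ne 0$; then $q(x) = p(x)/(x-1) > 0$ for $x > 1$, so $q(1) \ge 0$ by continuity and hence $q(1) > 0$. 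As $p'(1) = q(1)$, this gives $p'(1) > 0$, and thus $\adj(I - M(\lambda)) = p'(1)\,P$ has strictly positive entries. The analogous statement for $N(\lambda)$ follows verbatim.

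The argument is short and I do not anticipate a real obstacle; the one point to be careful about is that $M(\lambda)$ need not be primitive, so it may have eigenvalues of modulus exactly $1$ besides the Perron eigenvalue. This causes no trouble, since the proof uses only that $1$ is a simple eigenvalue and that no real eigenvalue exceeds $1$ --- both valid for any irreducible non-negative matrix. Equivalently one may factor $p'(1) = \prod_{i \ge 2}(1 - \mu_{i})$: the real factors satisfy $1 - \mu_{i} > 0$ because $\mu_{i} \in [-1,1)$, while non-real eigenvalues, being roots of a real polynomial, pair with their conjugates to contribute $|1 - \mu_{i}|^{2} > 0$.
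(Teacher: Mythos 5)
Your proposal is correct and follows essentially the same route as the paper: identify $\adj\left(I-M\left(\lambda\right)\right)$ with a positive multiple of the Perron projection of $M\left(\lambda\right)$ and then check that the normalizing factor, the derivative of the characteristic polynomial at $x=1$, is positive because $1$ is the simple, largest real root. Your sign argument merely spells out in more detail what the paper asserts in one line, including the harmless possibility of non-Perron eigenvalues of modulus $1$.
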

\begin{proof}
Recall that $\mu\left(\lambda\right)=1$, and so by Lemma \ref{Perron =00003D adj and trace}
there exist positive vectors $v,u$ such that
\[
\frac{\adj\left(I-M\left(\lambda\right)\right)}{\tr\left(\adj\left(I-M\left(\lambda\right)\right)\right)}=\frac{vu^{T}}{u^{T}v}.
\]
It follows that all the entries of\textbf{ $\mbox{\ensuremath{\adj\left(I-M\left(\lambda\right)\right)}}$}
are non-zero and have the same sign as $\tr\left(\adj\left(I-M\left(\lambda\right)\right)\right)$,
which is $\frac{d}{dx}p_{M_{G}\left(\lambda_{G}\right)}\left(x\right)\vert_{x=1}$
by Jacobi's formula. But $1$ is a simple root of the characteristic
polynomial and is the largest one, and therefore its derivative at
$x=1$ is positive.
\end{proof}
\begin{lem}
\label{simple pole}The Laplace transforms of the graph counting functions
$A_{i,j}\left(x\right)$ and $B_{i,\alpha}\left(x\right)$ have a
simple pole at $\lambda$.
\end{lem}
\begin{proof}
The point $s=\lambda$ is a singular point of the Laplace transforms,
because by the previous lemma the numerator $\left(\mbox{adj}\left(I-M\left(\lambda\right)\right)\right)_{ij}$
is non-zero while the denominator has a zero at $\lambda$. It is
thus enough to show that the zero of $\det\left(I-M\left(s\right)\right)$
at $\lambda$ is a simple one. For $\sigma\in\mathbb{R}$, the characteristic
polynomial of $M\left(\sigma\right)$ is given by
\[
p_{M\left(\sigma\right)}\left(x\right)=\det\left(xI-M\left(\sigma\right)\right)=\left(x-\mu\left(\sigma\right)\right)\left(x-\mu_{2}\left(\sigma\right)\right)\cdots\left(x-\mu_{n}\left(\sigma\right)\right),
\]
where $\mu(\sigma)$ is the Perron-Frobenius eigenvalue of $M\left(\sigma\right)$.
Therefore $\mu\left(\lambda\right)=1$, and $\mu_{j}\left(\sigma\right)\not=1$
for $j\geq2$ in a small neighborhood of $\lambda$, and
\[
\det\left(I-M\left(\sigma\right)\right)=\left(1-\mu\left(\sigma\right)\right)\cdots\left(1-\mu_{n}\left(\sigma\right)\right).
\]
It follows from equation \eqref{eq:derivative of PF eigenval is negative}
that the function $\left(1-\mu\left(\sigma\right)\right)$ has a simple
zero at $\lambda$, and the same holds for the function $\det\left(I-M\left(\sigma\right)\right)$
and therefore also for $\det\left(I-M\left(s\right)\right)$.
\end{proof}
\begin{lem}
\label{no other poles on critical line}For all $t\not=0$,
\[
\det\left(I-M\left(\lambda+\mathbf{i}t\right)\right)\not=0,
\]
that is the Laplace transforms have no other poles on the line $\mbox{Re}\left(s\right)=\lambda$
than at $s=\lambda$ itself.
\end{lem}
\begin{proof}
Say that $G$ has the \textbf{single-edge property} if for every pair
of vertices $i,j$ in $G$ there is at most one edge from vertex $i$
to vertex $j$. Given a graph $G$, we define a graph $\widetilde{G}$
by adding a new vertex in the middle of every edge in $G$. There
is a natural one to one map between paths in $G$ and paths in $\widetilde{G}$
which originate and terminate in the original set of vertices, and
clearly $\widetilde{G}$ has the single-edge property. Therefore,
there is no loss of generality assuming that this property holds for
$G$.

Put $M_{ij}=\left(M\left(\lambda\right)\right)_{ij}$. Since $G$
has the single-edge property, the entries of the matrix $M\left(\lambda\right)$
are either $M_{ij}=e^{-\lambda\cdot l\left(\alpha\right)}$ if there
is an edge $\alpha\in\mathcal{E}$ connecting the vertex $i$ to the
vertex $j$, or $M_{ij}=0$ if there is no such edge. Let $v=\left(v_{1},...,v_{n}\right)$
be a positive eigenvector of $M\left(\lambda\right)$ corresponding
to the eigenvalue $\mu\left(\lambda\right)=1$ and define $D$ to
be the following invertible matrix
\[
D=\diag\left(v_{1},...,v_{n}\right),\,\,\,\,D^{-1}=\diag\left(\frac{1}{v_{1}},...,\frac{1}{v_{n}}\right).
\]
The matrix given by $S=D^{-1}MD$ is a non-negative and right stochastic
matrix. Indeed, since $S_{ij}=M_{ij}v_{j}/v_{i}$, it is clear that
$S_{ij}\geq0$ and that the sum of the elements of the $i$th row
is
\[
\sum_{j=1}^{n}S_{ij}=\sum_{j=1}^{n}M_{ij}\frac{v_{j}}{v_{i}}=\frac{1}{v_{i}}\sum_{j=1}^{n}M_{ij}v_{j}=\frac{v_{i}}{v_{i}}=1.
\]

Let $S\left(s\right)$ be the matrix with coefficients from $S$ raised
to the power of $s$, that is
\[
\left(S\left(s\right)\right)_{ij}=\left(S_{ij}\right)^{s}=\left(M_{ij}\right)^{s}\left(\frac{v_{j}}{v_{i}}\right)^{s}.
\]
Notice that
\[
S\left(s\right)=\left(D^{s}\right)^{-1}M\left(\lambda s\right)D^{s},
\]
that is $M\left(\lambda s\right)$ and $S\left(s\right)$ are similar,
and in particular they have the same characteristic polynomial $p_{S\left(s\right)}\left(x\right)=p_{M\left(\lambda s\right)}\left(x\right)$.
Recalling the definition of the characteristic polynomial and plugging
$x=1$ we see that
\[
\det\left(I-M\left(\lambda s\right)\right)=\det\left(I-S\left(s\right)\right)
\]
and so it is enough to show that $\det\left(I-S\left(1+\mathbf{i}t\right)\right)\not=0$
for all $t\not=0$.

The following argument is due to Parry (see \cite{key-6}). Assume
$\det\left(I-S\left(1+\mathbf{i}t\right)\right)=0$ for some $t\not=0$.
So there exists a non-zero vector $u=\left(u_{1},...,u_{n}\right)$
for which
\[
S\left(1+\mathbf{i}t\right)u=u,
\]
that is for all $i$
\[
\sum_{j=1}^{n}S_{ij}^{1+\mathbf{i}t}u_{j}=\sum_{j=1}^{n}S_{ij}S_{ij}^{\mathbf{i}t}u_{j}=u_{i}.
\]
By the triangle inequality, for all $i$
\[
\left|u_{i}\right|\leq\sum_{j=1}^{n}\left|S_{ij}^{1+\mathbf{i}t}u_{j}\right|=\sum_{j=1}^{n}S_{ij}\left|S_{ij}^{\mathbf{i}t}\right|\left|u_{j}\right|=\sum_{j=1}^{n}S_{ij}\left|u_{j}\right|,
\]
and together with the equality
\[
\sum_{j=1}^{n}S_{ij}=1
\]
this implies that
\[
\left|u_{1}\right|=...=\left|u_{n}\right|.
\]
Assume $\left|u_{j}\right|=r>0$ for all $j$, and notice that this
means $S_{ij}^{\mathbf{i}t}u_{j}$ are points on a circle of radius
$r$. We have therefore that every $u_{i}$, which is itself a point
on the circle of radius $r$, is a convex combination (that is, a
linear combination with positive coefficients all adding up to $1$)
of points on that same circle. This is only possible of course if
$S_{ij}^{\mathbf{i}t}u_{j}=u_{i}$ for all $j$ such that $S_{ij}\not=0$.

Now, for any closed orbit on the graph, let $\alpha_{1}=\left(k_{1},k_{2}\right),...,\alpha_{m}=\left(k_{m},k_{1}\right)$
denote the corresponding sequence of edges. We get
\[
\left(S_{k_{1}k_{2}}^{\mathbf{i}t}u_{k_{2}}\right)\left(S_{k_{2}k_{3}}^{\mathbf{i}t}u_{k_{3}}\right)...\left(S_{k_{m-1}k_{m}}^{\mathbf{i}t}u_{k_{m}}\right)\left(S_{k_{m}k_{1}}^{\mathbf{i}t}u_{k_{1}}\right)=u_{k_{1}}u_{k_{2}}...u_{k_{m-1}}u_{k_{m}}
\]
and so
\[
S_{k_{1}k_{2}}^{\mathbf{i}t}...S_{k_{m}k_{1}}^{\mathbf{i}t}=\left(S_{k_{1}k_{2}}...S_{k_{m}k_{1}}\right)^{\mathbf{i}t}=1,
\]
which gives
\[
\left(M_{k_{1}k_{2}}...M_{k_{m}k_{1}}\right)^{\mathbf{i}t}=1.
\]
But
\[
M_{k_{1}k_{2}}...M_{k_{m}k_{1}}=e^{-\left(l(\alpha_{1})+...+l(\alpha_{m})\right)}
\]
and so there exists some $l\in\mathbb{Z}$ for which
\[
t=\frac{2\pi l}{l\left(\alpha_{1}\right)+...+l\left(\alpha_{m}\right)}.
\]
This hold for every closed orbit on the graph, which is a contradiction
to our irrationality assumptions on the lengths of the closed orbits
on our incommensurable graph $G$.
\end{proof}
\begin{rem}
When proving the analogous Lemma for the case of the matrix $N$,
simply assign the probability $1$ to all edges of $\widetilde{G}$
originating at vertices of $\widetilde{G}$ which are not in $G$.
\end{rem}
\begin{rem}
There is another construction of a graph $\widehat{G}$ associated
to $G$ which preserves its structure and has the single-edge property.
Let $i\in\mathcal{V}$ be a vertex in $G$ and assume that there are
$k_{i}$ distinct edges which terminate at $i$ and $l_{i}$ distinct
edges which originate at $i$ as shown in Figure \ref{fig:Covering graph}.

\begin{figure}[H]
\includegraphics[scale=0.8]{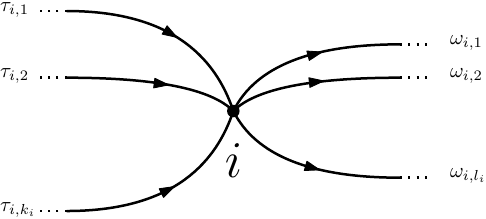}

\caption{\label{fig:Covering graph}Vertex $i\in\mathcal{V}$ and the edges
which terminate and originate at $i$ in $G$}
\end{figure}

Define $k_{i}\cdot l_{i}$ associated vertices in $\widehat{G}$,
indexed by
\[
\begin{array}{ccc}
\tau_{i1}-i-\omega_{i1}, & \ldots & \tau_{ik_{i}}-i-\omega_{i1}\\
\vdots &  & \vdots\\
\tau_{i1}-i-\omega_{il_{i}}, & \ldots & \tau_{ik_{i}}-i-\omega_{il_{i}}
\end{array}
\]
and repeat this procedure for all vertices in $G$. Define $\widehat{G}$
to contain a directed edge from $\widehat{v}_{1}=\tau_{is}-i-\omega_{it}$
to $\widehat{v}_{2}=\tau_{ju}-j-\omega_{jv}$ if and only if $\omega_{it}=\tau_{ju}$.
If there is such an edge, and if $\alpha$ is the edge in $G$ for
which $\alpha=\omega_{it}=\tau_{ju}$, then the edge between $\widehat{v}_{1}$
and $\widehat{v}_{2}$ is labeled by $\alpha$.

It can be shown that for every path $p$ in $G$ which originates
in vertex $i$ and terminates in vertex $j$, there are exactly $k_{i}\cdot l_{j}$
distinct paths in $\widehat{G}$ which are copies of $p$ in the sense
of the edges they consist of. It follows that functions counting paths
in $G$ which concern vertices $i$ and $j$ differ by a multiplicative
constant from the associated functions on $\widehat{G}$, and the
same holds for their Laplace transforms. As a result, the position
of the poles of the Laplace transforms is not changed.
\end{rem}
\begin{lem}
\label{residue at lambda}The residue of the function $\frac{\adj\left(I-M\left(s\right)\right)_{ij}}{\det\left(I-M\left(s\right)\right)}$
at $s=\lambda$ is
\[
Q_{ij}=\frac{\left(\adj\left(I-M\left(\lambda\right)\right)\right)_{ij}}{-\tr\left(\adj\left(I-M\left(\lambda\right)\right)\cdot M^{\prime}\left(\lambda\right)\right)}
\]
\end{lem}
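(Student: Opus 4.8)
The plan is to use the standard fact that if $g$ is holomorphic near $\lambda$ with a simple zero there and $f$ is holomorphic near $\lambda$, then the residue of $f/g$ at $\lambda$ equals $f(\lambda)/g'(\lambda)$. Here I would take $f(s)=\bigl(\adj\left(I-M\left(s\right)\right)\bigr)_{ij}$ and $g(s)=\det\left(I-M\left(s\right)\right)$; both are entire functions of $s$, since the entries of $M(s)$ are finite sums of exponentials and $\adj$ and $\det$ are polynomial in the entries. By the lemma asserting that the Laplace transforms have a simple pole at $\lambda$, we already know that $g$ has a simple zero at $s=\lambda$, so $g'(\lambda)\neq0$ and the residue formula applies, giving residue $=f(\lambda)/g'(\lambda)$.

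It then remains to evaluate $g'(\lambda)$. First I would apply Jacobi's formula $\frac{d}{ds}\det B\left(s\right)=\tr\bigl(\adj\left(B\left(s\right)\right)B^{\prime}\left(s\right)\bigr)$ with $B(s)=I-M(s)$, so that $B^{\prime}(s)=-M^{\prime}(s)$, giving
\[
g^{\prime}(\lambda)=\tr\bigl(\adj\left(I-M\left(\lambda\right)\right)\cdot\left(-M^{\prime}\left(\lambda\right)\right)\bigr)=-\tr\bigl(\adj\left(I-M\left(\lambda\right)\right)\cdot M^{\prime}\left(\lambda\right)\bigr).
\]
Substituting into $f(\lambda)/g^{\prime}(\lambda)$ then yields exactly
\[
Q_{ij}=\frac{\bigl(\adj\left(I-M\left(\lambda\right)\right)\bigr)_{ij}}{-\tr\bigl(\adj\left(I-M\left(\lambda\right)\right)\cdot M^{\prime}\left(\lambda\right)\bigr)},
\]
as claimed.

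Since this is essentially a one-line computation once the preceding structural lemmas are in place, there is no real obstacle; the only points worth spelling out are that the denominator is nonzero — which is precisely the simplicity of the zero of $\det\left(I-M\left(s\right)\right)$ at $\lambda$ established above — and, if one wishes, that it is in fact positive. For the latter one can factor $\det\left(I-M\left(s\right)\right)=\left(1-\mu\left(s\right)\right)\prod_{j\geq2}\left(1-\mu_{j}\left(s\right)\right)$ near $\lambda$, differentiate, and combine $\mu(\lambda)=1$, the inequality $\mu^{\prime}(\lambda)<0$ from \eqref{eq:derivative of PF eigenval is negative}, and $\prod_{j\geq2}\left(1-\mu_{j}\left(\lambda\right)\right)=\tr\left(\adj\left(I-M\left(\lambda\right)\right)\right)>0$ from the previous lemma; this also shows $Q_{ij}>0$, consistent with the positivity of $\adj\left(I-M\left(\lambda\right)\right)$. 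The identical argument applies verbatim with $N$ in place of $M$.
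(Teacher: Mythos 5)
Your proposal is correct and follows essentially the same route as the paper: since the pole at $s=\lambda$ was already shown to be simple, the residue is $\left(\adj\left(I-M\left(\lambda\right)\right)\right)_{ij}$ divided by $\frac{d}{ds}\det\left(I-M\left(s\right)\right)\vert_{s=\lambda}$, and Jacobi's formula applied to $B\left(s\right)=I-M\left(s\right)$ gives the denominator $-\tr\left(\adj\left(I-M\left(\lambda\right)\right)\cdot M^{\prime}\left(\lambda\right)\right)$, exactly as in the paper. Your additional remarks on the sign of the denominator and the positivity of $Q_{ij}$ are consistent with the paper's preceding lemmas but are not needed for this step.
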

\begin{proof}
We have seen that the function has a simple pole at $s=\lambda$,
and so the residue at $s=\lambda$ is
\[
\frac{\left(\adj\left(I-M\left(\lambda\right)\right)\right)_{ij}}{\frac{d}{ds}\left(\det\left(I-M\left(s\right)\right)\right)\vert_{s=\lambda}}.
\]
Finally, use Jacobi's formula to obtain
\begin{eqnarray*}
\frac{d}{ds}\left(\det\left(I-M\left(s\right)\right)\right) & = & \tr\left(\adj\left(I-M\left(\lambda\right)\right)\cdot\left(I-M\left(s\right)\right)^{\prime}\left(\lambda\right)\right)\\
 & = & -\tr\left(\adj\left(I-M\left(\lambda\right)\right)\cdot M^{\prime}\left(\lambda\right)\right).
\end{eqnarray*}
Combining the above we get the desired formula for the residue at
hand.
\end{proof}

\subsection{Proof of Theorems \ref{main result 1} and \ref{Main result 2}}

We show how to conclude the main statements of this paper from the
Lemmas proven above.
\begin{proof}[Proof of Theorem \ref{main result 1}]
Let $G$ be a strongly connected incommensurable graph and let $M$
be the graph matrix function of $G$. For $\sigma\in\mathbb{R}$ the
matrix $M\left(\sigma\right)$ is real and due to Perron-Frobenius
there exists a dominant real eigenvalue $\mu\left(\sigma\right)$
of multiplicity $1$. By Lemma \ref{lem:lambda for with PF eigenval is 1}
there exists $\lambda>0$ such that $\mu\left(\lambda\right)=1$ and
for every $\sigma>\lambda$ the corresponding dominant eigenvalue
satisfies $\mu\left(\sigma\right)<1$. By Lemma \ref{lem: inequality complex matrix dominated by real}
the series $\sum_{k=0}^{\infty}M^{k}\left(s\right)$ converges for
all $s=\sigma+\mathbf{i}t$ with $\sigma>\lambda$ to $\frac{\adj\left(I-M\left(s\right)\right)}{\det\left(I-M\left(s\right)\right)}$
and so by Corollary \ref{Laplace transforms}
\[
\mathcal{L}\left\{ A_{i,j}\left(x\right)\right\} \left(s\right)=\frac{1}{s}\cdot\frac{\left(\mbox{adj}\left(I-M\left(s\right)\right)\right)_{ij}}{\det\left(I-M\left(s\right)\right)}.
\]
By Lemma \ref{simple pole} this $\mathcal{L}\left\{ A_{i,j}\left(x\right)\right\} \left(s\right)$
has a simple pole at $s=\lambda$ and by Lemma \ref{no other poles on critical line}
there are no other poles on the line $\mbox{Re}\left(s\right)=\lambda$.
By Lemma \ref{residue at lambda} the residue of $\mathcal{L}\left\{ A_{i,j}\left(x\right)\right\} \left(s\right)$
at $s=\lambda$ is
\[
\frac{1}{\lambda}\frac{\left(\adj\left(I-M\left(\lambda\right)\right)\right)_{ij}}{-\tr\left(\adj\left(I-M\left(\lambda\right)\right)\cdot M^{\prime}\left(\lambda\right)\right)}=\frac{1}{\lambda}Q_{ij}.
\]
Therefore applying the Wiener-Ikehara Theorem yields statement $\left(i\right)$,
namely that the number of paths from $i\in\mathcal{V}$ to $j\in\mathcal{V}$
of length at most $x$ grows as
\[
\frac{1}{\lambda}Q_{ij}e^{\lambda x}+o\left(e^{\lambda x}\right),\quad x\rightarrow\infty.
\]

Replacing $A_{i,j}\left(x\right)$with $B_{i,j}\left(x\right)$ and
repeating these steps yields statement $\left(ii\right)$, namely
that the number of paths of length exactly $x$ from some vertex $i$
to a point on the edge $\alpha$ grows a
\[
\frac{1-e^{-l\left(\alpha\right)\lambda}}{\lambda}Q_{ij}e^{\lambda x}+o\left(e^{\lambda x}\right),\quad x\rightarrow\infty.
\]
\end{proof}
The proof of Theorem \ref{Main result 2} is analogous to the proof
of Theorem \ref{main result 1} described above. Lemma \ref{lem:lambda for with PF eigenval is 1}
is replaced by Lemma \ref{probability dominant eigenvalue} and as
mentioned above, Lemmas analogous to Lemmas \ref{generating function convergence and formula}\textendash \ref{residue at lambda}
hold when replacing $M$ with the graph probability matrix function
$N$.

\section{Applications\label{sec:Examples-and-Applications}}

\subsection{Summation over regions of Pascal triangle}

The well known triangular array of binomial coefficients contains
many patterns of numbers and properties of combinatorial interest.
It is a straightforward observation that summation of the binomial
coefficients in the triangle $OBA$ of sides $OA=\frac{x}{a}$ and
$OB=\frac{x}{b}$ (see Figure \ref{fig:pascal}) is equivalent to
counting paths of length at most $x$ in a graph with a single vertex
and two loops of lengths $a$ and $b$.

\begin{figure}[h]
\includegraphics[scale=0.7]{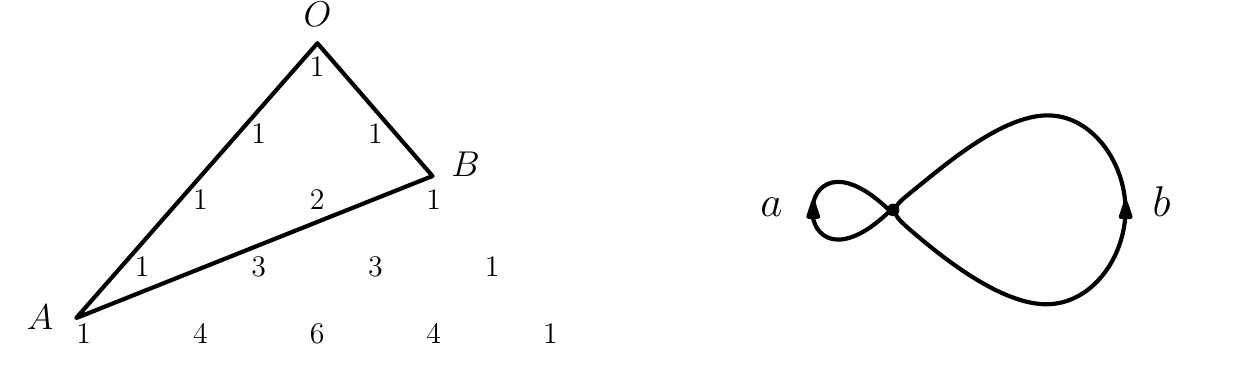}\caption{\label{fig:pascal}A region in Pascal's triangle and the associated
graph.}
\end{figure}

This easily generalizes to Pascal pyramids of higher dimension and
weighted graphs with a singe vertex and several loops, and it would
be interesting to understand the full correspondence between weighted
graphs and regions in Pascal pyramids.

\subsection{Multiscale Substitution Schemes\label{subsec:Multiscale-substitution}}

A tile $T$ in $\mathbb{R}^{d}$ is a Lebegue measurable bounded set with positive measure and boundary of measure zero. Consider a finite set of tiles $\mathcal{F}=\left\{ \mathcal{T}_{1},\ldots,\mathcal{T}_{n}\right\} $
in $\mathbb{R}^{d}$ which we call prototiles, and assume for simplicity $\vol\mathcal{T}_{i}=1$.
A tile $T$ is said to be of type $i$ if it maps to $\mathcal{T}_{i}$
by a similarity map. A multiscale substitution scheme on $\mathcal{F}$ is a set of substitution rules on elements of $\mathcal{F}$ prescribing a tiling of each
prototile by finitely many rescaled copies of tiles of types appearing in $\mathcal{F}$.

A multiscale substitution scheme can be modeled using a directed weighted
graph $G$ with a vertex set indexed by elements of $\mathcal{F}$ and an edge set defined by the substitution rule: if the tiling
of $\mathcal{T}_{i}$ includes a tile of type $j$, that is a copy
of $\alpha\mathcal{T}_{j}$ with $0<\alpha<1$, then $G$ admits
a directed edge of length $a=-\log\alpha$ connecting vertex $i$
to $j$. A multiscale substitution scheme is called irreducible if $\text{\ensuremath{G}}$
is strongly connected, and incommensurable if $G$ is incommensurable.
An example of an incommensurable multiscale substitution scheme on a single
prototile with scales $\alpha_{1}=\frac{1}{3}$ and $\alpha_{2}=\frac{2}{3}$
is shown in Figure \ref{fig:A-multiscale-substitution}.\\

\begin{figure}[h]
\includegraphics[scale=0.7]{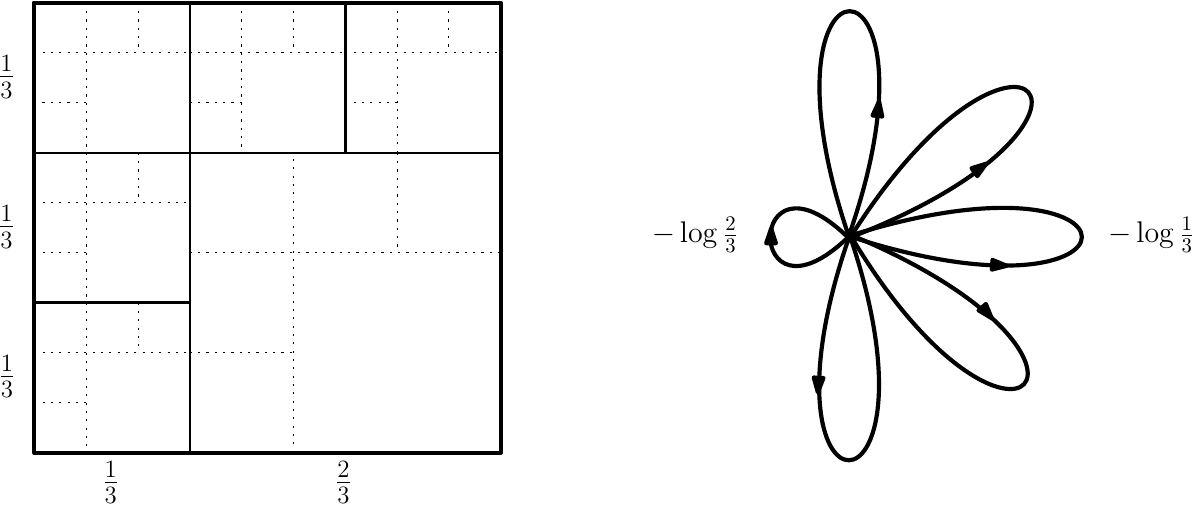}\caption{\label{fig:A-multiscale-substitution}A multiscale substitution scheme
and the associated graph.}
\end{figure}

Observe that if $G$ is a graph associated with a $d$ dimensional
multiscale substitution scheme, then $\lambda=d$ and the Perron-Frobenius
eigenvector of $M\left(d\right)$ corresponding to $\mu=1$ can be
chosen to be $v=\left(1,\ldots,1\right)$. These properties enable
us to address questions concerning the geometrical objects described
bellow.

\subsubsection*{Kakutani Splitting Procedure}

Consider the unit interval $I=\left[0,1\right]$ and some $\alpha\in\left(0,1\right)$.
Kakutani introduced the following splitting procedure which generates
a sequence of partitions of $I$ which is known as the $\alpha$-Kakutani's sequence
of partitions (see \cite{key-3213}). Begin with $\pi_{0}=I$ the
trivial partition of $I$, and define $\pi_{1}$ to be the partition
of $I$ one gets after splitting $I$ into two intervals of lengths
$\alpha$ and $1-\alpha$. Assume that the partition $\pi_{n}$ is
defined, then $\pi_{n+1}$ is the partition of $I$ one gets from
$\pi_{n}$ after splitting the interval of maximal length in $\pi_{n}$
into two parts, proportional to $\alpha$ and $1-\alpha$.

For example, the first few $\alpha$-Kakutani partitions of the unit interval
with $\alpha=\frac{1}{3}$ are shown in Figure \ref{fig:Kakutani's-parition-with},
together with the associated graph. The dashed lines represent intervals
of maximal length in each partition.

\begin{figure}[h]
\includegraphics[scale=0.7]{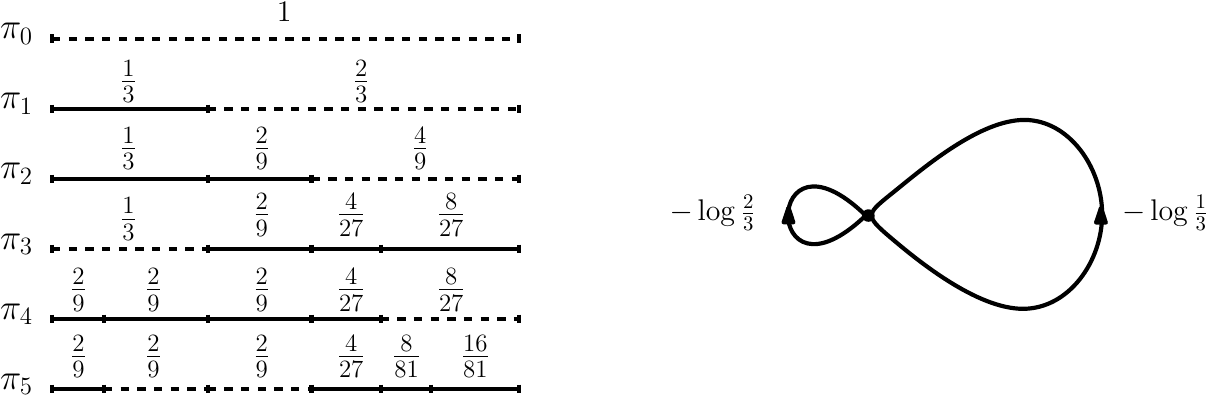}\caption{\label{fig:Kakutani's-parition-with}The $\frac{1}{3}$-Kakutani sequence of partitions and the associated graph.}
\end{figure}

We say that a sequence $\pi_{n}$ of partitions of $I$ is uniformly
distributed if for any continuous function $f$ on $I$ we have
\[
\lim_{n\rightarrow\infty}\frac{1}{k\left(n\right)}\sum_{i=1}^{k\left(n\right)}f\left(t_{i}^{\left(n\right)}\right)=\int\limits _{I}f\left(t\right)dt
\]
where $k\left(n\right)$ is the number of intervals in the partition
$\pi_{n}$, and $t_{i}^{\left(n\right)}$ is the right endpoint of
the $i$th interval in the partition $\pi_{n}$. The following result
is due to Kakutani.
\begin{thm*}
For any $\alpha\in\left(0,1\right)$, the  $\alpha$-Kakutani sequence
of partitions of $I$ is uniformly distributed.
\end{thm*}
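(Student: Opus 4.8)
The plan is to realize Kakutani's procedure as the one‑dimensional multiscale substitution on the single prototile $I=[0,1]$ with scales $\alpha$ and $1-\alpha$, and then to extract the asymptotic distribution of the endpoints from Theorem~\ref{main result 1}. The associated graph $G$ has one vertex and two loops of lengths $a=-\log\alpha$ and $b=-\log(1-\alpha)$, and every interval produced by iterating the substitution is $I_{w}$ for a word $w\in\{0,1\}^{*}$ (label the $\alpha$‑piece by $0$ and the $(1-\alpha)$‑piece by $1$), with $|I_{w}|=e^{-l(w)}$ where $l(w)=\#_{0}(w)\,a+\#_{1}(w)\,b$ is the length of the corresponding closed path. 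For $x\ge 0$ I introduce the auxiliary \emph{level‑$x$ partition} $\mathcal{P}_{x}$ of $I$ consisting of those $I_{w}$ for which $l(w)>x$ while the parent word $w'$ of $w$ has $l(w')\le x$; each part of $\mathcal{P}_{x}$ has length in $[e^{-x-\max(a,b)},\,e^{-x})$, and since the underlying truncated tree is a full binary tree one has $\#\mathcal{P}_{x}=\#\{w:l(w)\le x\}+1$, i.e.\ $\#\mathcal{P}_{x}$ equals the number of closed paths at the vertex of length at most $x$ up to an additive constant. For a one‑dimensional multiscale substitution $\lambda=1$ (here $M(1)=\alpha+(1-\alpha)=1$), so by Theorem~\ref{main result 1}$(i)$, $\#\mathcal{P}_{x}=Q_{11}e^{x}+o(e^{x})$ with $Q_{11}>0$; in particular $\#\mathcal{P}_{x}\to\infty$.

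The heart of the argument is a renewal identity for $\mathcal{P}_{x}$: for any word $u$ and any $x\ge l(u)$, a part $I_{w}$ of $\mathcal{P}_{x}$ is contained in $I_{u}$ exactly when $w=uw'$ with $I_{w'}$ a part of the level‑$(x-l(u))$ partition, so $\#\{J\in\mathcal{P}_{x}:J\subseteq I_{u}\}=\#\mathcal{P}_{x-l(u)}$. Dividing by $\#\mathcal{P}_{x}$ and using the asymptotics above, $\#\{J\in\mathcal{P}_{x}:J\subseteq I_{u}\}/\#\mathcal{P}_{x}\to e^{-l(u)}=|I_{u}|$ as $x\to\infty$. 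Since for $x\ge l(u)$ every part of $\mathcal{P}_{x}$ is either contained in $I_{u}$ or has interior disjoint from it, the number of parts whose right endpoint lies in $I_{u}$ differs from $\#\{J\subseteq I_{u}\}$ by at most one, whence the uniform probability measure $\nu_{x}$ on the right endpoints of the parts of $\mathcal{P}_{x}$ satisfies $\nu_{x}(I_{u})\to|I_{u}|$ for every substitution interval $I_{u}$. As any $[0,t]$ can be sandwiched between two finite unions of substitution intervals (taken from the level‑$m$ partition, which has mesh at most $e^{-m}$) whose measures differ by at most $e^{-m}$, monotonicity in $t$ forces $\nu_{x}([0,t])\to t$ for all $t$; equivalently $\nu_{x}$ converges weakly to Lebesgue measure, i.e.\ $\frac{1}{\#\mathcal{P}_{x}}\sum_{J\in\mathcal{P}_{x}}f(r(J))\to\int_{0}^{1}f$ for every continuous $f$, where $r(J)$ denotes the right endpoint of $J$.

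It remains to pass from the family $(\mathcal{P}_{x})_{x\ge 0}$ to the Kakutani partitions $(\pi_{n})_{n}$, for which $k(n)=\#\pi_{n}=n+1$. Because each Kakutani step subdivides a longest remaining interval, i.e.\ one of least $l$‑value, the quantity $x_{n}:=\min_{J\in\pi_{n}}l(J)$ is non‑decreasing in $n$ and every interval already subdivided in $\pi_{n}$ has $l$‑value at most $x_{n}$; consequently $\mathcal{P}_{x_{n}-\varepsilon}\preceq\pi_{n}\preceq\mathcal{P}_{x_{n}+\varepsilon}$ for all small $\varepsilon>0$, and $x_{n}\to\infty$ since only finitely many words have $l$‑value below any given bound. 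Refining $\mathcal{P}_{x_{n}-\varepsilon}$ to $\pi_{n}$ only subdivides some parts, leaving their right endpoints among the endpoints of $\pi_{n}$, so $\big|\sum_{J\in\pi_{n}}f(r(J))-\sum_{J\in\mathcal{P}_{x_{n}-\varepsilon}}f(r(J))\big|\le\|f\|_{\infty}\,(\#\mathcal{P}_{x_{n}+\varepsilon}-\#\mathcal{P}_{x_{n}-\varepsilon})$, while $\#\pi_{n}$ lies between $\#\mathcal{P}_{x_{n}-\varepsilon}$ and $\#\mathcal{P}_{x_{n}+\varepsilon}$. Hence, once we know that the jump $\#\mathcal{P}_{L+\varepsilon}-\#\mathcal{P}_{L-\varepsilon}$ at any value $L$ is $o(\#\mathcal{P}_{L})$ as $L\to\infty$, the second paragraph gives $\frac{1}{n+1}\sum_{i=1}^{n+1}f(t_{i}^{(n)})\to\int_{0}^{1}f$, which is the asserted uniform distribution.

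The main obstacle is therefore precisely that last jump estimate. Under the incommensurability hypothesis there is, for each $L$, at most one pair $(p,q)$ with $pa+qb=L$, and the words of $l$‑value exactly $L$ are the $\binom{p+q}{p}$ words with $\#_{0}=p,\ \#_{1}=q$, each of which is a leaf of $\mathcal{P}_{L-\varepsilon}$ that becomes an internal node in $\mathcal{P}_{L+\varepsilon}$; since the tree is full binary, the jump equals $\binom{p+q}{p}$. A Stirling estimate bounds this: $\binom{p+q}{p}\le e^{(p+q)H(p/(p+q))}$ with $H$ the binary entropy, while $e^{L}=\alpha^{-p}(1-\alpha)^{-q}$, and since the relative entropy of $p/(p+q)$ against $\alpha$ is non‑negative one gets $\binom{p+q}{p}\le (\mathrm{const})\,e^{L}/\sqrt{p+q}$; as $p+q\to\infty$ when $L\to\infty$, the jump is $o(e^{L})=o(\#\mathcal{P}_{L})$, as required. (When $\alpha$ is commensurable, $G$ is not incommensurable and Theorem~\ref{main result 1} does not apply directly; there the $l$‑values lie in a lattice $c\,\mathbb{Z}_{\ge 0}$, and the same argument goes through using the periodic Perron--Frobenius asymptotics of the corresponding subdivided non‑weighted graph and evaluating $\mathcal{P}_{x}$ only for $x$ away from the lattice — or one simply invokes Kakutani's original proof in this case.)
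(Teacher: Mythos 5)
Two remarks before the substance: the paper does not actually prove this statement --- it is quoted as Kakutani's classical theorem with a citation to \cite{key-3213}, and the Theorem~\ref{main result 1}--based generalization to multiscale substitutions is deferred to \cite{key-3154145} --- so your proposal is being measured against the intended route rather than a written proof. For incommensurable $\alpha$ (i.e.\ $\log\alpha/\log(1-\alpha)\notin\mathbb{Q}$) your argument is essentially sound and is exactly the natural application of Theorem~\ref{main result 1}: the renewal identity $\#\{J\in\mathcal{P}_{x}:J\subseteq I_{u}\}=\#\mathcal{P}_{x-l(u)}$ together with $\#\mathcal{P}_{x}=Q_{11}e^{x}+o(e^{x})$ gives $\nu_{x}(I_{u})\to|I_{u}|$, and the sandwich $\mathcal{P}_{x_{n}-\varepsilon}\preceq\pi_{n}\preceq\mathcal{P}_{x_{n}+\varepsilon}$ correctly handles arbitrary tie-breaking. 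One imprecision should be fixed: for fixed $\varepsilon>0$ the quantity $\#\mathcal{P}_{L+\varepsilon}-\#\mathcal{P}_{L-\varepsilon}$ is of order $(e^{\varepsilon}-e^{-\varepsilon})e^{L}$ and is never $o(\#\mathcal{P}_{L})$; what your argument needs, and what your binomial/Stirling estimate actually bounds, is the number of words with $l$-value exactly $x_{n}$ (the $\varepsilon\to0$ jump, well defined since the $l$-values are locally finite), so the reduction in your third paragraph should be phrased in those terms (or, alternatively, one can dispense with the jump estimate entirely by letting $n\to\infty$ first and $\varepsilon\to0$ second).

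The genuine gap is the commensurable case, which is part of ``any $\alpha\in(0,1)$''. There Theorem~\ref{main result 1} is unavailable, and the problem is not merely the missing asymptotic: when $a,b$ lie in a lattice $c\mathbb{Z}$, the jump of $\#\mathcal{P}_{x}$ at a lattice point is a positive proportion of $\#\mathcal{P}_{x}$ (for $\alpha=\tfrac12$ there are $2^{k}$ words of $l$-value exactly $k\log 2$ against $2^{k+1}-1$ of value at most $k\log2$), and your $x_{n}$ are always such lattice values, so the error term in your sandwich argument is not negligible --- the scheme of your last two paragraphs breaks, not just its input. Worse, under your reading of the procedure (one interval split per step, $k(n)=n+1$, ties resolved arbitrarily) the statement itself is false for $\alpha=\tfrac12$: always splitting the leftmost longest interval yields, at $n=\tfrac32\cdot2^{k}-1$, a proportion $\tfrac23$ of the right endpoints in $[0,\tfrac12]$, so the sequence is not uniformly distributed along that subsequence. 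Hence the commensurable case forces Kakutani's actual convention that \emph{all} intervals of maximal length are split simultaneously (under which $\pi_{n}$ coincides with your level partitions and one can argue via the lattice renewal / Perron--Frobenius asymptotics of the subdivided unweighted graph), and it requires a genuine argument; the parenthetical ``invoke Kakutani's original proof'' is a citation of the very theorem being proved, so as written the proposal only establishes the incommensurable case.
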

Kakutani's splitting procedure is generalized in various ways, see for example \cite{key-14351451} and \cite{key-21413}. An additional generalization comes from multiscale substitution schemes, where we begin with an initial tile $\mathcal{T}_{1}$, and define a sequence of partitions of $\mathcal{T}_{1}$ using the substitution rule applied at each stage to tiles of maximal volume. In fact, the example given above of Kakutani's original procedure, can be considered as generated by a multiscale
substitution scheme in $\mathbb{R}^{1}$ with $\mathcal{F}=\left\{ I\right\} $
the unit interval and $\alpha_{1}=\frac{1}{3}$ and $\alpha_{2}=\frac{2}{3}$.

Using Theorem \ref{main result 1}, it is shown in \cite{key-3154145}
that Kakutani splitting procedures which correspond to incommensurable
multiscale substitution schemes are uniformly distributed.

\subsubsection*{Multiscale Substitution Tilings of Euclidean Spaces}

A multiscale substitution scheme in $\mathbb{R}^{d}$ can be used to
generate a tiling of the entire space. We define a sequence of tilings
of finite regions of $\mathbb{R}^{d}$ which depends on a continuous
time parameter $t$ in the following way: At $t=0$ apply the substitution
rule on an initial tile $\mathcal{T}_{1}$, and inflate the resulting
patch of tiles at a constant speed. Any tile which reaches volume
$1$ is then substituted as dictated by the multiscale substitution
rule, and so on. An appropriate compact topology defined on closed
subsets of the space allows us to take limits of sequences of these
partial tilings, and these limits define tilings of $\mathbb{R}^{d}$.
The generalization of the pinwheel tiling which is presented in \cite{key-8}
can be regarded as a multiscale substitution tiling.

Although there is no uniqueness in the construction of tilings using
multiscale substitutions, all tilings defined this way share various
properties which can be analyzed using the multiscale substitution scheme
itself and the underlying weighted graph. For example, tilings which
are associated with incommensurable multiscale substitution schemes are of
infinite local complexity, unlike classical substitution tilings or
tilings defined using cut-and-project constructions (for more on tilings
and mathematical models of quasicrystals see \cite{key-17367436743}).
Our Theorem \ref{main result 1} may be used to study various statistics
of these tilings, see \cite{key-41234124} for more details, and \cite{key-15362562362356}
and \cite{So2} for relevant results concerning classical substitution
tilings.

\subsection{Physics Applications}

The propagation of radiation pulses through networks of wave-guides
requires for its study the full theory of wave dynamics, where interference
effects play an important role (see \cite{key-234554} and references
therein). However, under certain circumstances the interference effects
can be neglected, which opens the possibility to study this system
within a classical dynamics setting: The network is modeled by a metric,
directed graph $G$, where for any directed edge $\alpha$ connecting
vertex $u$ to vertex $v$, there exists a \textquotedbl{}reverse\textquotedbl{}
edge $\widehat{\alpha}$ from $v$ to $u$ of the same length. The
vertices correspond to the junctions in the network where wave-guides
are connected. In the classical model, one considers a point mass
moving at a unit speed along a directed edge $\alpha$, towards the
vertex $v$. Reaching $v$, the point mass is scattered into any of
the edges $\alpha'$ which emanate from $v$ where it continues to
move with unit speed. The probabilities to make the transitions from
$\alpha$ to $\alpha'$ are prescribed by the properties of the connectors
in the wave-guide network.

The network is connected to the outside world through leads which
are coupled to a subset of vertices $\mathcal{H}$. One of the vertices
$s\in\mathcal{H}$ is connected to a radiation source which sends
short pulses to the network at specified times. Another vertex $t\in\mathcal{H}$
is connected to a lead which ends with a detector where the time of
arrival is measured. The radiation which is scattered to the leads
escapes from the network. In the classical model, a particle is injected
to the vertex $s$ at a given time, and once it scattered from $t$
to the lead, its arrival time is measured. Repeating the process one
can obtain the probability distribution of the transition times. This
model can be analyzed within the formalism provided by Theorem \ref{Main result 2}.

The complete wave theory and the derivation of the corresponding classical
model are provided in \cite{key-234554}. This paper also includes
an analysis of a simple network (similar to the one shown here in
Figure \ref{fig:pascal}, with the lead connected at the single vertex)
and the transition time distributions computed both in the wave and
the classical descriptions are compared.

\end{document}